\providecommand{\U}[1]{\protect\rule{.1in}{.1in}}
\newtheorem{theorem}{Theorem}
\newtheorem{lemma}[theorem]{Lemma}
\newtheorem{proposition}[theorem]{Proposition}
\newtheorem{remark}[theorem]{Remark}
\newenvironment{proof}[1][Proof]{\textbf{#1.} }{\ \rule{0.5em}{0.5em}}
\begin{document}

\title{\textbf{Differential characters and cohomology of the moduli of flat
Connections}}
\author{\textbf{Marco Castrill\'{o}n L\'{o}pez, Roberto Ferreiro P\'{e}rez}}
\date{}
\maketitle

\begin{abstract}
\noindent Let $\pi\colon P\to M$ be a principal bundle and $p$ an invariant
polynomial of degree r on the Lie algebra of the structure group. The theory of
Chern-Simons differential characters is exploited to define an homology map
$\chi^{k} : H_{2r-k-1}(M)\times H_{k}(\mathcal{F}/\mathcal{G})\to
\mathbb{R}/\mathbb{Z}$, for $k<r-1$, where $\mathcal{F}
/\mathcal{G}$ is the moduli space of flat connections of $\pi$ under the
action of a subgroup $\mathcal{G}$ of the gauge group. The differential characters of first order are related to the Dijkgraaf-Witten action for Chern-Simons Theory. The second order characters are interpreted geometrically as the holonomy of a connection in a line bundle over $\mathcal{F}/\mathcal{G})$. The relationship with
other constructions in the literature is also analyzed.

\end{abstract}

\medskip

\noindent\emph{Mathematics Subject Classification 2010:\/} 53C05, 55R40,
51H25.

\smallskip

\noindent\emph{Key words and phrases:\/} Characteristic classes,
Chern-Simons, differential characters, moduli of flat connections.

\smallskip

\noindent \emph{Acknowledgements:\/} M.C.L. was partially supported by
MINECO (Spain) under grant MTM2015--63612-P.

R.F.P. was partially supported by "Proyecto de investigaci\'{o}n
Santander-UCM PR26/16-20305"

\section{Introduction}

Given a principal bundle and an invariant polynomials $p$ of the Lie algebra
on the structure group, the Chern-Simons differential characters are maps
defined on the space of cycles of the base manifold and taking values in $%
\mathbb{R}/\mathbb{Z}$. Recall that this classical construction is specially
relevant when the characteristic classes of the bundle are trivial, since in
that case the maps vanish on boundaries and descend to homology with values
in $\mathbb{R}/\mathbb{Z}$. They are the so-called secondary Chern-Simons
classes. In this paper we study the Chern-Simons differential characters for
the principal bundle $(P\times \mathcal{A})/\mathcal{G}\rightarrow M\times
\mathcal{A}/\mathcal{G}$, where $P\rightarrow M$ is any principal bundle, $%
\mathcal{A}$ is its affine space of connections and $\mathcal{G}$ is any
subgroup of the group of gauge transformations of $P$ acting freely on $%
\mathcal{A}$. The construction is completed, and this is the main result of
this article, when we restrict the character to the subset $\mathcal{F}%
\subset \mathcal{A}$ of flat connections. Then we prove that, consistently
with the Chern-Simons construction, an homology map
\begin{equation*}
\chi ^{k}:H_{2r-k-1}(M)\times H_{k}(\mathcal{F}/\mathcal{G})\rightarrow
\mathbb{R}/\mathbb{Z},
\end{equation*}%
can be defined any for $k<r-1=\mathrm{deg}(p)$. In the case $k=r-1$ it only
gives a map on cycles $\chi ^{r-1}:H_{r}(M)\times Z_{r-1}(\mathcal{F}/%
\mathcal{G})\rightarrow \mathbb{R}/\mathbb{Z}$ that does not vanish on
boundaries.

The final goal of the definition of $\chi ^{k}$ is the construction of a
tool providing topological information of the moduli space $\mathcal{F}/%
\mathcal{G}$ of flat connection of $P\rightarrow M$. It is well known the
importance of this information in many contexts both in Physics and in
Mathematics: bundles modelling quantum phases as that of the Aharonov-Bohm
effect, topological Field Theories, moduli spaces of Yang-Mills solution on
Riemann surfaces, stability of vector bundles over algebraic varieties, etc.
The study of all these situations are present in many works in the
literature and cherished by many authors. Our contribution provides a step
in that direction.

Along the construction of our main objects, the characteristic classes of $%
(P\times \mathcal{A})/\mathcal{G}$ are regarded through the equivariant
cohomology of the bundle of connections $C(P)\rightarrow M$. One advantage
of working with the equivariant cohomology of $C(P)$ relies on the fact that
it is finite-dimensional and we can easily use local coordinates. We also
obtain the results by working directly with the equivariant cohomology of $%
M\times \mathcal{A}$. The bundle of connections is equipped with universal
characteristic classes for any invariant polynomial $p$. In particular, this
is specially relevant in the case where $2\mathrm{deg}p>\mathrm{dim}M$, that
is, when the corresponding characteristic class vanishes on $M$ because of
dimension considerations. In \cite{BC}, the universal classes are used to
obtain maps $H_{2r-k-1}(M)\times H_{k}(\mathcal{F})\rightarrow \mathbb{R}$,
invariant under the action of the gauge group in $\mathcal{F}$. The
invariance induces an homology map for cycles in $\mathcal{F}/\mathcal{G}$
that are projection of cycles in $\mathcal{F}$. Our work generalizes this
map to arbitrary elements of $H_{\bullet }(\mathcal{F}/\mathcal{G})$, modulo
integers, thus providing a more complete geometric information of the moduli
space.

Now we explain how our results are obtained. In \cite{AS} Chern-Weil theory
is applied to the principal $G$-bundle $(P\times \mathcal{A})/\mathcal{G}%
\rightarrow M\times \mathcal{A}/\mathcal{G}$. A polynomial $p\in I_{\mathbb{Z%
}}^{r}(G)$ determines a cohomology class $c_{p}\in H^{2r}(M\times \mathcal{A}%
/\mathcal{G})$, and by integrating this class on a closed $d$-dimensional
submanifold $c$ of $M$, a cohomology class $\int_{c}c_{p}\in H^{2r-d}(%
\mathcal{A}/\mathcal{G})$ is obtained. Moreover, as $p\in I_{\mathbb{Z}%
}^{r}(G)$, we can also apply the Chern-Simons construction to this bundle.
We use the Cheeger-Simons approach of \cite{Cheeger} based on differential
characters and we denote by $\hat{H}^{k}(M)$ the space of differential
characters of order $k$. If $\mathfrak{A}$ is a connection on the principal $%
\mathcal{G}$-bundle $\mathcal{A}\rightarrow \mathcal{A}/\mathcal{G}$, it
determines a connection $\underline{\mathfrak{A}}$ on $(P\times \mathcal{A})/%
\mathcal{G}\rightarrow M\times \mathcal{A}/\mathcal{G}$ (see below for
details) and hence a differential form $p(\underline{\mathfrak{F}})\in
\Omega ^{2r}(M\times \mathcal{A}/\mathcal{G})$ whose cohomology class is $%
c_{p}$. As $p\in I_{\mathbb{Z}}^{r}(G)$, there exists a differential
character (the Chern-Simons differential character) $\chi _{\underline{%
\mathfrak{A}}}\in \hat{H}^{2r}(M\times \mathcal{A}/\mathcal{G})$ whose
curvature is $p(\underline{\mathfrak{F}})$. As pointed out in \cite{Cheeger}
a Chern-Simons character is determined by a universal characteristic class $%
\Upsilon \in H^{2r}(\textbf{B}G)$ compatible with $p$. The character $\chi _{%
\underline{\mathfrak{A}}}$ defines maps $\chi _{\underline{\mathfrak{A}}%
}^{k}\colon Z_{2r-k-1}(M)\times Z_{k}(\mathcal{A}/\mathcal{G})\rightarrow
\mathbb{R}/\mathbb{Z}$. We study its restriccion to the moduli space of flat
connections and we obtain maps $\chi ^{k}:H_{2r-k-1}(M)\times H_{k}(\mathcal{%
F}/\mathcal{G})\rightarrow \mathbb{R}/\mathbb{Z}$ for $k<r-1$, and $\chi
^{r-1}:H_{r}(M)\times Z_{r-1}(\mathcal{F}/\mathcal{G})\rightarrow \mathbb{R}/%
\mathbb{Z}$ that don't depend on the connection $\mathfrak{A}$ chosen.
Hence, for any $c\in Z_{d}(M)$ we define the differential character $\xi _{c,%
\underline{\mathfrak{A}}}\in \hat{H}^{2r-d}(\mathcal{A}/\mathcal{G})$ by $%
\xi _{c,\underline{\mathfrak{A}}}(s)=\chi _{\underline{\mathfrak{A}}%
}(c\times s)$ and the curvature of $\xi _{c,\underline{\mathfrak{A}}}$ is $%
\int_{c}p(\underline{\mathfrak{F}})$. When we restrict to the moduli space
of flat connections we obtain cohomology classes $\xi _{c}\in H^{2r-d-1}(%
\mathcal{F}/\mathcal{G},\mathbb{R}/\mathbb{Z})$. We obtain similar results
for the moduli space of irreducible flat connections, but using
characteristic classes of the group $\tilde{G}=G/Z(G)$, where $Z(G)$ is the
center of $G$. We show in Section \ref{Sectparticular} that these cohomology
classes can be related to other constuctions in the literature.

Finally, we study the geometric interpretation of the characters $\xi _{c,%
\underline{\mathfrak{A}}}$ of order 1 and 2. The geometrical interpretation
of higher order differential characters is not so simple, and we postpone it
for future research.

As it is well known, a first order differential character can be interpreted
as a function $\mathcal{A}/\mathcal{G}\rightarrow \mathbb{R}/\mathbb{Z}$. In
Section \ref{1orderIntCS} we identify this function with the
Dijkgraaf-Witten action for Chern-Simons theory defined in \cite{DW}. When
restrited to $\mathcal{F}/\mathcal{G}$, we obtain that the Chern-Simons
action is locally constant and that it only depends on the homology class of
the submanifold $c$.

When $d=2r-2$ we have $\xi _{c,\underline{\mathfrak{A}}}\in \hat{H}^{2}(%
\mathcal{A}/\mathcal{G})$ and, by general results on differential
cohomology, it can be represented as the holonomy of a connection on a
Hermitian line bundle over $\mathcal{A}/\mathcal{G}$. By fixing a connection
$A_{0}\in \mathcal{A}$, we define an explicit principal $U(1)$-bundle $%
\mathcal{U}_{c}\rightarrow \mathcal{A}/\mathcal{G}$ with connection $%
\underline{\Theta }_{c}$ and the holonomy of $\underline{\Theta }_{c}$ is
shown to be $\xi _{c,\underline{\mathfrak{A}}}$.  A similar construction is
given in \cite{Freed2} for families of connections, with the assumption that
the bundles are trivial and working with local trivializations. Our result
applies also for non trivial bundles, and we use a global construction.

The character $\xi _{c,\underline{\mathfrak{A}}}$, the connection $%
\underline{\Theta }_{c}$ and the action of $\mathcal{G}\ $on $\mathcal{A}%
\times U(1)$ are defined in terms of a connection $\mathfrak{A}$ on the
principal $\mathcal{G}$-bundle $\mathcal{A}\rightarrow \mathcal{A}/\mathcal{G%
}$. However we prove that the action of $\mathcal{G}\ $on $\mathcal{A}\times
U(1)$ does not depend on the connection $\mathfrak{A}$ chosen.

We study the restriction of this bundle to the moduli space of connections.
Then the connection $\underline{\Theta }_{c}$ does not depend on $\mathfrak{A%
}$. Furthermore,for $r>1$ The holonomy of $\underline{\Theta }_{c}$ only
depends on the homology class of $c$. For $r>2$ the connection is flat and
the holonomy of $\underline{\Theta }_{c}$\ determines the homology class $%
\xi _{c}\in H^{1}(\mathcal{F}/\mathcal{G},\mathbb{R}/\mathbb{Z})$.

Along the article, smooth will mean $C^{\infty }$, $\Omega _{\mathbb{Z}}^{k}$
will denote the space of closed $k$-forms with integral periods, and $%
\mathcal{I}_{\mathbb{Z}}^{r}(G)$ the algebra of invariant polynomials of
degree $r$ on the Lie algebra $\mathfrak{g}$ that are invariant with respect
of the adjoint action of $G$ on $\mathfrak{g}$ and determine an integer
characteristic class.

\section{preliminaries}

\subsection{Chern-Simons differential characters}

Given an entire form $\alpha \in \Omega _{\mathbb{Z}}^{k}(N)$ and a boundary
$c=\partial u$ of a chain $u\in C_{k}(N)$, the integral ${\textstyle\int
\nolimits_{u}}\alpha\operatorname{mod}\mathbb{Z}$ depends only on $c$ and not
on $u$, as if $u^{\prime }$ is another chain such that $c=\partial u^{\prime
}$, then we have
\begin{equation*}
{\int\nolimits_{u^{\prime }}}\alpha ={\int\nolimits_{u}}\alpha +{
\int\nolimits_{u^{\prime }-u}}\alpha ={\int\nolimits_{u}}\alpha\operatorname{mod}
\mathbb{Z}.
\end{equation*}
Hence $\alpha $ defines a homomorphism $\chi \colon B_{k-1}(N)\rightarrow
\mathbb{R}/\mathbb{Z}$. Moreover, if $\alpha $ is the characteristic class
of a bundle, Chern and Simons shown (see \cite{CHS}) that it is always
possible to extend this map to the space of all cycles on $M$ obtaining a
homomorphism $\chi \colon Z_{k-1}(N)\rightarrow \mathbb{R}/\mathbb{Z}$ such
that $\chi (\partial u)=\int\nolimits_{u}\alpha $ for every $u\in C_{k}(N)$.
We now recall this construction (see \cite{CHS} for details). If $A$ is a
connection on a principal $G$-bundle $q\colon Q\rightarrow N$ with curvature
$F$, and $p\in \mathcal{I}_{\mathbb{Z}}^{r}(G)$, we have $p(F)\in \Omega _{%
\mathbb{Z}}^{k}(N)$. The pull-back of this form to $Q$ is exact $q^{\ast
}p(F)=dTp(A)$, where $Tp(A)$ is the Chern-Simons form. Although $Tp(A)$ is
not projectable onto $N$, its reduction modulo $\mathbb{Z}$ satisfyes
\begin{equation*}
\widetilde{Tp(A)}=j^{\ast }\chi _{A}+d\Lambda ,
\end{equation*}%
for certain
\begin{equation*}
\chi _{A}\colon Z_{2r-1}(N)\rightarrow \mathbb{R}/\mathbb{Z}
\end{equation*}%
and $\Lambda \colon C_{2r-2}(Q)\rightarrow \mathbb{R}/\mathbb{Z}$, where $%
\widetilde{z}$ stands for the image of $z\in \mathbb{R}$ under the map $%
\mathbb{R}\rightarrow \mathbb{R}/\mathbb{Z}$. The Chern-Simons construction
can be interpreted in terms of differential characters (see \cite{Cheeger}
and \cite{BB}\ for details). A Cheeger-Simons differential character of
order $k$ is a homomorphism $\chi \colon Z_{k-1}(N)\rightarrow \mathbb{R}/%
\mathbb{Z}$ such that there exist $\alpha \in \Omega ^{k}(N)$ with satisfies
$\chi (\partial u)={\int\nolimits_{u}}\alpha $ for every $u\in C_{k}(N)$. We
say that $\chi $ is a differential character with curvature $\mathrm{curv}%
(\chi )=\alpha $. We denote the space of differential characters of order $k$
on $N$ by $\hat{H}^{k}(N)$. If $f\colon N^{\prime }\rightarrow N$ is a
smooth map, it induces a map $f^{\ast }\colon \hat{H}^{k}(N)\rightarrow \hat{%
H}^{k}(N^{\prime })$ defined by $f^{\ast }\chi (u)=\chi (f\circ u)$.

In general $\chi _{A}$ is not unique, but it can be seen (e.g. see \cite%
{Cheeger}) that the differential character can be determined uniquely by
choosing a universal characteristic class $\Upsilon \in H^{2r}(\textbf{B}G,\mathbb{Z})
$\ on the classifying space $\textbf{B}G$\ compatible with $p$ (i.e. such that $\iota
(\Upsilon )=c_{p}\in H^{2r}(\textbf{B}G,\mathbb{R})$, where $\iota \colon H^{2r}(\textbf{B}G,
\mathbb{Z})\rightarrow H^{2r}(\textbf{B}G,\mathbb{R})$ is the natural map and $c_{p}$
the real universal characteristic class determined by $p\in \mathcal{I}_{%
\mathbb{Z}}^{r}(G)$). We call $\chi _{A}\in \hat{H}^{2r}(N)$ the
Chern-Simons differential character associated to $p$ , $\Upsilon $ and $A$,
and we have $\mathrm{curv}(\chi _{A})=p(F)$. If $A^{\prime }$ is another
connection on $Q$ we have the following identities
\begin{equation*}
p(F_{A^{\prime }})=p(F_{A})+dTp(A^{\prime },A),
\end{equation*}%
and
\begin{equation}
\chi _{A^{\prime }}(c)=\chi _{A}(c)+{\int_{c}}Tp(A^{\prime },A),
\label{trangression}
\end{equation}%
where $Tp(A^{\prime },A)$ is the transgression form
\begin{equation}
Tp(A^{\prime },A)=r\int_{0}^{1}p(A^{\prime }-A,F_{A_{t}},\overset{(r-1}{%
\ldots },F_{A_{t}})dt,  \label{trangression2}
\end{equation}%
with $A_{t}=(1-t)A^{\prime }+tA$. If $Q^{\prime }\rightarrow N^{\prime }$ is
another principal $G$-bundle and $f\colon Q^{\prime }\rightarrow Q$ is a $G$%
-bundle morphism we have, $\chi _{f^{\ast }A}=\underline{f}^{\ast }\chi _{A}$%
, where $\underline{f}\colon N^{\prime }\rightarrow N$ is the induced map on
the bases.

\begin{remark}
The original Chern-Simons and Cheeger-Simons constructions are valid for
finite dimensional manifolds, but they can be extended to Banach or Fr\'{e}%
chet infinite dimensional manifolds (see \cite{BB} for example).
\end{remark}

\section{Equivariant Cohomology in the Cartan model}

Let $\mathcal{G}$ be a connected Lie group acting upon a manifold $N$ and
let
\begin{equation*}
\Omega _{\mathcal{G}}(N)=\left( \mathbf{S}^{\bullet }(\mathrm{Lie\,}\mathcal{%
G}^{\ast })\otimes \Omega ^{\bullet }(N)\right) ^{\mathcal{G}}=\mathcal{P}%
^{\bullet }(\mathrm{Lie\,}\mathcal{G},\Omega ^{\bullet }(N))^{\mathcal{G}}
\end{equation*}%
be the space of $\mathcal{G}$-invariat polynomials on $\mathrm{Lie\,}%
\mathcal{G}$ taking values in $\Omega ^{\bullet }(N)$. We consider the
graduation in $\Omega _{\mathcal{G}}(N)$ given by $\deg (\alpha )=2k+r$ for $%
\alpha \in \mathcal{P}^{k}(\mathrm{Lie\,}\mathcal{G},\Omega ^{r}(N))$, so
that the subspace of $\mathcal{G}$-equivariant differential $q$-forms is
\begin{equation*}
\Omega _{\mathcal{G}}^{q}(N)=\bigoplus_{2k+r=q}(\mathcal{P}^{k}(\mathrm{Lie\,%
}\mathcal{G},\Omega ^{r}(N)))^{\mathcal{G}}.
\end{equation*}%
The Cartan equivariant differential $d_{c}\colon \Omega _{\mathcal{G}%
}^{q}(N)\rightarrow \Omega _{\mathcal{G}}^{q+1}(N)$ is defined as
\begin{equation*}
(d_{c}\alpha )(X)=d(\alpha (X))-i_{X_{N}}\alpha (X),\quad \forall X\in
\mathrm{Lie\,}\mathcal{G},
\end{equation*}%
where $X_{N}\in \mathfrak{X}(N)$ is the vector field defined by the
infinitesimal action of $X$. It is easy to check that $\left( d_{c}\right)
^{2}=0$. The equivariant cohomology in the Cartan model (see for example
\cite{BGV,GuiS}) of $N$ with respect of the action of $\mathcal{G}$ is
defined as the cohomology of this complex.

We now recall the relationship between equivariant cohomology and the
cohomology of the quotient space.\ If the action of $\mathcal{G}$ on $N$ is
free and $N/\mathcal{G}$ is a manifold, then $\pi \colon N\rightarrow N/%
\mathcal{G}$ is a principal $\mathcal{G}$-bundle. Given a principal
connection $A$ in this bundle, we define
\begin{equation*}
\mathrm{C}_{A}(p\otimes \omega )=p(F_{A})\wedge \omega _{\mathrm{hor}A},
\end{equation*}%
for any $p\otimes \omega \in (\mathbf{S}^{\bullet }(\mathrm{Lie\,}\mathcal{G}%
^{\ast })\otimes \Omega ^{\bullet }(N))^{\mathcal{G}}$, where $\beta _{%
\mathrm{hor}A}$ denotes the horizontalization of $\beta \in \Omega ^{\bullet
}(N)$ with respect to the connection $A$. Extending this map by linearity,
we have a sort of Chern-Weil homomorphism
\begin{equation*}
\mathrm{C}_{A}\colon \Omega _{\mathcal{G}}^{\bullet }(N)\rightarrow \left(
\Omega ^{\bullet }(N)\right) _{A\,\mathrm{basic}}\simeq \Omega ^{\bullet }(N/%
\mathcal{G}).
\end{equation*}%
satisfying that $\mathrm{C}_{A}(d_{c}\alpha )=d(\mathrm{C}_{A}(\alpha ))$.
We thus have a map $\mathrm{C}_{A}\colon H_{\mathcal{G}}^{\bullet
}(N)\rightarrow H^{\bullet }(N/\mathcal{G})$, that in fact does not depend
on the connection $A$.

We study the map $\mathrm{C}_{A}$ in the two simplest cases:

\paragraph{Equivariant $1$-forms\label{1equi}}

A $\mathcal{G}$-equivariant $1$-form is simply a $\mathcal{G}$-invariant $1$%
-form $\alpha $, and $\pi ^{\ast }\mathrm{C}_{A}(\alpha )=\alpha _{\mathrm{%
hor}A}=\alpha -\alpha (A)$, where $\alpha (A(Y))=\alpha _{y}(A(Y)_{N})$ for $%
Y\in T_{y}N$. The form $\alpha $ is $D$ closed if $d\alpha =0$ and $\iota
_{X_{N}}\alpha =0$ for every $X\in \mathrm{Lie}\mathcal{G}$. In this case $%
\alpha $ projects onto a closed $1$-form $\underline{\alpha }\in \Omega
^{1}(N/\mathcal{G})$. We conclude that if $\alpha $ is $D$-closed, then $%
\mathrm{C}_{A}(\alpha )=\underline{\alpha }$ does not depend on the
connection $A$ chosen.

\paragraph{Equivariant $2$-forms\label{2equi}}

A $\mathcal{G}$-equivariant $2$-form $\varpi $ is given by $\varpi
(X)=\sigma +\mu (X)$ where $\sigma $ is a $\mathcal{G}$-invariant $2$-form
and $\mu \colon \mathrm{Lie}\mathcal{G}\rightarrow \Omega ^{0}(N)$ a linear $%
\mathcal{G}$-equivariant map. The form $\varpi $ is $D$-closed if $d\sigma =0
$ and $i_{X_{N}}\sigma =\mu (X)$ for every $X$. Hence $\mu $ is a co-moment
map for $\sigma $. We use below the following result

\begin{lemma}
\label{2equiLemma}If $A$ is a connection on $N\rightarrow N/\mathcal{G}$, we
define $\mu (A)\in \Omega ^{1}(N)$ by $\mu (A)(Y)=\mu (A(Y))$ for $Y\in TN$,
and $\mu (A)$ is $\mathcal{G}$-invariant. If $\varpi =\sigma +\mu $ is $D$%
-closed then we have $\pi ^{\ast }\mathrm{C}_{A}(\varpi )=\varpi +D(\mu
(A))=\sigma +d(\mu (A))$.
\end{lemma}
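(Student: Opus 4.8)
The plan is to treat the two assertions in turn: the $\mathcal{G}$-invariance of $\mu(A)$, and then the main identity, which I would recast as an equality of $A$-basic $2$-forms verified on horizontal vectors. For invariance, I would evaluate $R_g^{\ast}\mu(A)$ on a tangent vector $Y$ and propagate the $\mathcal{G}$-action through its two ingredients: since $A$ is a principal connection it is equivariant, $A(dR_g\,Y) = \mathrm{Ad}_{g^{-1}}(A(Y))$, while the $\mathcal{G}$-equivariance of $\mu \in (\mathbf{S}^1(\mathrm{Lie}\,\mathcal{G}^{\ast})\otimes\Omega^0(N))^{\mathcal{G}}$ compensates exactly this adjoint twist. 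The two actions cancel and $R_g^{\ast}\mu(A) = \mu(A)$; this is routine equivariance bookkeeping.

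Next I would record the equivariant identity $\varpi + D(\mu(A)) = \sigma + d(\mu(A))$. Viewing the invariant $1$-form $\mu(A)$ as an element of $\Omega^1_{\mathcal{G}}(N)$, one has $D(\mu(A))(X) = d(\mu(A)) - i_{X_N}\mu(A)$; because $A(X_N) = X$ for the fundamental field $X_N$, the contraction is $i_{X_N}\mu(A) = \mu(A(X_N)) = \mu(X)$, which cancels the degree-$(1,0)$ component $\mu(X)$ of $\varpi$ and leaves $\sigma + d(\mu(A))$. This records the conceptual content of the lemma: $\mathrm{C}_A$ modifies $\varpi$ only by the equivariantly exact term $D(\mu(A))$.

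The core is $\pi^{\ast}\mathrm{C}_A(\varpi) = \sigma + d(\mu(A))$. Both sides are $A$-basic: the left by construction of $\mathrm{C}_A$, and the right because $i_{X_N}(\sigma + d\mu(A)) = d\mu(X) + \left(L_{X_N}\mu(A) - d\,i_{X_N}\mu(A)\right) = d\mu(X) - d\mu(X) = 0$, using Cartan's formula, the invariance of $\mu(A)$, and the $D$-closedness relation $i_{X_N}\sigma = d\mu(X)$ (which expresses that $\mu$ is a comoment map for $\sigma$). Since a basic $2$-form is determined by its values on horizontal vectors, it suffices to compare both sides on a horizontal pair $hY_1, hY_2$. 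From the definition of $\mathrm{C}_A$ one gets $\pi^{\ast}\mathrm{C}_A(\varpi) = \sigma_{\mathrm{hor}\,A} + \mu(F_A)$, where $\mu(F_A)(\cdot,\cdot) = \mu\bigl(F_A(\cdot,\cdot)\bigr)$; on horizontal vectors $\sigma_{\mathrm{hor}\,A}$ agrees with $\sigma$, so the claim reduces to $d(\mu(A))(hY_1, hY_2) = \mu\bigl(F_A(hY_1, hY_2)\bigr)$.

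For this last step I would write $A = A^a e_a$, $\mu_a = \mu(e_a)$, so that $\mu(A) = \mu_a A^a$ and $d(\mu(A)) = d\mu_a \wedge A^a + \mu_a\,dA^a$. On horizontal vectors the first term vanishes because $A$ annihilates them, and the bracket part of $F_A^a = dA^a + \tfrac12 c^a_{bc}A^b\wedge A^c$ likewise vanishes, so $dA^a(hY_1, hY_2) = F_A^a(hY_1, hY_2)$; summing against $\mu_a$ gives precisely $\mu\bigl(F_A(hY_1, hY_2)\bigr)$, which closes the argument. The main obstacle is notational rather than computational: one must keep the three objects written ``$\mu$ of something'' — the comoment function $\mu(X)$, the invariant $1$-form $\mu(A)$, and the basic $2$-form $\mu(F_A)$ — cleanly separated, and verify at each stage that the relevant form is horizontal or invariant, so that the reduction to horizontal vectors is legitimate.
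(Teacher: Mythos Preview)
Your proof is correct. The paper states Lemma~\ref{2equiLemma} without proof, so there is nothing to compare against; your argument supplies exactly the verification one would expect. A few remarks:

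\begin{itemize}
\item Your derivation of the moment-map condition $i_{X_N}\sigma = d\mu(X)$ from $D$-closedness is the right one; note that the paper's text just above the lemma writes ``$i_{X_N}\sigma = \mu(X)$'', which is a typo (the two sides have different form degree), and your version is the correct reading.
\item The strategy of checking that $\sigma + d(\mu(A))$ is $A$-basic and then comparing with $\sigma_{\mathrm{hor}\,A} + \mu(F_A)$ on horizontal pairs is clean and complete. The key computational step, that $d(\mu(A))$ and $\mu(F_A)$ agree on horizontal vectors because both $A$ and the bracket term of $F_A$ vanish there, is exactly right.
\item Your cancellation $\varpi + D(\mu(A)) = \sigma + d(\mu(A))$ is the conceptual heart of the lemma (showing $\mathrm{C}_A$ alters $\varpi$ by an equivariantly exact term), and you handle it correctly via $i_{X_N}\mu(A) = \mu(A(X_N)) = \mu(X)$.
\end{itemize}

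There is no gap; the argument would be accepted as a full proof of the lemma.
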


\paragraph{Equivariant characteristic classes }

We also recall here the definition of equivariant characteristic classes of
Berline and Vergne (see \cite{BV1,BV2,BT}). Let $\pi \colon Q\rightarrow N$
a principal $G$-bundle where another Lie group $\mathcal{G}$ acts upon by
automorphisms of this bundle. Let $A$ be a connection on $Q$ invariant under
the action of $\mathcal{G}$. It can be proved (see \cite{BV1}, \cite{BT})
that for every $X\in \mathrm{Lie}\mathcal{G}$ the $\mathfrak{g}$-valued
function $A(X_{P})$ is of adjoint type and defines a section of the adjoint
bundle $v_{A}(X)\in \Omega ^{0}(N,\mathrm{ad}Q)$. For every $p\in \mathcal{I}%
^{r}(G)$ the $\mathcal{G}$-equivariant characteristic form associated to $p$
and $A$, $p_{\mathcal{G},A}\in \Omega _{\mathcal{G}}^{2k}(N)$, is defined as
\begin{align}
p_{\mathcal{G},A}(X)& =p\left( F_{A}-v_{A}(X),\overset{(r}{\ldots }%
,F_{A}-v_{A}(X)\right)   \label{equivariant_form} \\
& =\sum_{i=0}^{r}(-1)^{r-i}\tbinom{r}{i}p(F_{A},\overset{(i}{\ldots }%
,F_{A},v_{A}(X),\overset{(r-i}{\ldots },v_{A}(X))  \notag
\end{align}%
for every $X\in \mathrm{Lie}\mathcal{G}$.

Now we consider the case where $\mathcal{G}$ acts freely on $Q$ and $N$, the
quotients $Q/\mathcal{G}$ and $N/\mathcal{G}$ exist and $Q/\mathcal{G}%
\rightarrow N/\mathcal{G}$ is again a principal $G$-bundle
\begin{equation}  \label{diagrama}
\begin{array}{ccc}
Q & \overset{\bar{\pi}_\mathcal{G}}{\rightarrow} & Q/\mathcal{G} \\
\downarrow &  & \downarrow \\
N & \overset{\pi _\mathcal{G}}{\rightarrow} & N/\mathcal{G}%
\end{array}%
\end{equation}
In this case we have the following result.

\begin{proposition}
\label{quotcha} If $A_{1}$ is a $\mathcal{G}$-invariant connection on $%
Q\rightarrow N$ and $A_{2}$ a connection on the (left) $\mathcal{G}$%
-principal bundle $\pi _{\mathcal{G}}\colon N\rightarrow N/\mathcal{G}$,
then we define the $\mathfrak{g}$-valued $1$-form
\begin{equation*}
A_{1}(A_{2})(\xi )=A_{1}((A_{2}(\pi _{\ast }\xi ))_{Q}),\qquad \xi \in TQ,
\end{equation*}%
Then $A\in \Omega ^{1}(Q,\mathfrak{g})$ defined as $A=A_{1}-A_{1}(A_{2})$ is
projectable to $Q/\mathcal{G}$ and the projection is a connection form $%
\underline{A}$ on $Q/\mathcal{G}\rightarrow N/\mathcal{G}$. The curvature
form of $\underline{A}$ satisfies $\pi _{\mathcal{G}}^{\ast }F_{\underline{A}%
}=(F_{A_{1}})_{\mathrm{hor}A_{2}}-v_{A_{1}}(\pi _{\mathcal{G}}^{\ast
}F_{A_{2}})$. Therefore,
\begin{equation*}
\mathrm{C}_{A_{2}}(p_{\mathcal{G},A_{1}})=p(F_{\underline{A}}).
\end{equation*}
\end{proposition}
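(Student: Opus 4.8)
The plan is to establish the three claims in order, reducing the curvature identity to a single structure-equation computation. Throughout set $\phi:=A_{1}(A_{2})\in\Omega^{1}(Q,\mathfrak{g})$, so that $A=A_{1}-\phi$; by definition $\phi(\xi)=A_{1}((A_{2}(\pi_{\ast}\xi))_{Q})=v_{A_{1}}(A_{2}(\pi_{\ast}\xi))$, where $(\,\cdot\,)_{Q}$ is the fundamental vector field of the $\mathcal{G}$-action and $v_{A_{1}}(X)=A_{1}(X_{Q})$.

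First I would check that $A$ is $\mathcal{G}$-basic and a $G$-connection form on $Q\to N$, so that it descends. The $\mathcal{G}$-horizontality is the key point: for $X\in\mathrm{Lie}\,\mathcal{G}$ one has $\pi_{\ast}X_{Q}=X_{N}$ and $A_{2}(X_{N})=X$, hence $\phi(X_{Q})=A_{1}(X_{Q})=v_{A_{1}}(X)$ and $A(X_{Q})=A_{1}(X_{Q})-v_{A_{1}}(X)=0$; the $\mathcal{G}$-invariance of $A$ follows from that of $A_{1}$ together with the $\mathrm{Ad}$-equivariance of $A_{2}$ and of the fundamental-field assignment, and since $\mathcal{G}$ is connected it may be verified infinitesimally. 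That $A$ is a $G$-connection is equally direct: for $\zeta\in\mathfrak{g}$ we have $\pi_{\ast}\zeta_{Q}=0$, so $\phi(\zeta_{Q})=0$ and $A(\zeta_{Q})=\zeta$, while the $\mathrm{Ad}$-equivariance of $A$ follows from that of $A_{1}$, the identity $\pi\circ R_{g}=\pi$, and the commutativity of the $G$- and $\mathcal{G}$-actions. Thus $A=\bar{\pi}_{\mathcal{G}}^{\,\ast}\underline{A}$ for a connection $\underline{A}$ on $Q/\mathcal{G}\to N/\mathcal{G}$, proving the first assertion.

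Next, for the curvature I would expand, using $[A_{1}\wedge\phi]=[\phi\wedge A_{1}]$,
\[
F_{A}=F_{A_{1}}-d\phi-[A_{1}\wedge\phi]+\tfrac{1}{2}[\phi\wedge\phi].
\]
By naturality of the curvature under the $G$-bundle morphism $\bar{\pi}_{\mathcal{G}}$ and the first assertion, $F_{A}=\pi_{\mathcal{G}}^{\,\ast}F_{\underline{A}}$ as an $\mathrm{ad}\,Q$-valued $2$-form on $N$, so it annihilates every $\mathcal{G}$-vertical vector; the same holds for the target $(F_{A_{1}})_{\mathrm{hor}A_{2}}-v_{A_{1}}(\pi_{\mathcal{G}}^{\,\ast}F_{A_{2}})$, since both horizontalisation and $F_{A_{2}}$ kill $\mathcal{G}$-vertical vectors. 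It therefore suffices to compare the two forms on $A_{2}$-horizontal fields. Taking $A_{2}$-horizontal vector fields $Y,Z$ on $N$ with $A_{1}$-horizontal lifts $\xi,\eta$ on $Q$, one has $\phi(\xi)=\phi(\eta)=0$ and $A_{1}(\xi)=A_{1}(\eta)=0$, so all bracket terms vanish and only $d\phi(\xi,\eta)=-\phi([\xi,\eta])$ survives. Since $[\xi,\eta]$ is $\pi$-related to $[Y,Z]$ and the structure equation for $A_{2}$ gives $A_{2}([Y,Z])=-F_{A_{2}}(Y,Z)$ on horizontal fields, we get $\phi([\xi,\eta])=-v_{A_{1}}(F_{A_{2}}(Y,Z))$ and hence $F_{A}(\xi,\eta)=F_{A_{1}}(Y,Z)-v_{A_{1}}(F_{A_{2}}(Y,Z))$, which agrees with the target because $(F_{A_{1}})_{\mathrm{hor}A_{2}}=F_{A_{1}}$ on horizontal fields. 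This structure-equation step is the main computation, and the only delicate point is tracking the $d\phi$ term correctly.

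Finally, the Chern-Weil statement follows by applying $\mathrm{C}_{A_{2}}$ termwise to the expansion (\ref{equivariant_form}) of $p_{\mathcal{G},A_{1}}$. Under $\mathrm{C}_{A_{2}}$ each argument $v_{A_{1}}(X)$ is evaluated at $F_{A_{2}}$, producing the $2$-form $v_{A_{1}}(F_{A_{2}})$, and each factor $F_{A_{1}}$ is horizontalised to $(F_{A_{1}})_{\mathrm{hor}A_{2}}$; the binomial coefficients then reassemble the sum, exactly as in (\ref{equivariant_form}), into $p\big((F_{A_{1}})_{\mathrm{hor}A_{2}}-v_{A_{1}}(F_{A_{2}})\big)$. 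By the second assertion this equals $p(\pi_{\mathcal{G}}^{\,\ast}F_{\underline{A}})=\pi_{\mathcal{G}}^{\,\ast}p(F_{\underline{A}})$, and since $\pi_{\mathcal{G}}^{\,\ast}$ is injective on $\Omega^{\bullet}(N/\mathcal{G})$ we conclude $\mathrm{C}_{A_{2}}(p_{\mathcal{G},A_{1}})=p(F_{\underline{A}})$.
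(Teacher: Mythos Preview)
Your proof is correct and follows essentially the same route as the paper: verify $\mathcal{G}$-basicness and the $G$-connection axioms directly, then compute the curvature by evaluating on $A_{2}$-horizontal lifts. The only cosmetic difference is that the paper phrases the curvature step via the bracket formula $F_{\underline{A}}(\underline{h}X,\underline{h}Y)=-\underline{A}([\underline{h}X,\underline{h}Y])$ after identifying the $\underline{A}$-horizontal lift as $\underline{h}=(\bar\pi_{\mathcal{G}})_{*}\circ h_{1}\circ h_{2}$, whereas you expand the structure equation $F_{A}=F_{A_{1}}-d\phi-[A_{1}\wedge\phi]+\tfrac{1}{2}[\phi\wedge\phi]$ first; both reduce to the same bracket computation $\phi([\xi,\eta])=-v_{A_{1}}(F_{A_{2}}(Y,Z))$.
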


\begin{proof}
Given any $X\in\mathrm{Lie}\mathcal{G}$, we have
\begin{equation*}
A(X_{Q})=A_{1}(X_{Q})-A_{1}((A_{2}(X_{N}))_{Q})=A_{1}(X_{Q})-A_{1}(X_{Q})=0.
\end{equation*}
Moreover, $A$ is $\mathcal{G}$-invariant. Indeed, for any $\gamma \in%
\mathcal{G}$ and any $\xi\in\mathfrak{X}(Q)$, we have
\begin{align*}
A(\gamma_{*} \xi) & =A_{1}(\gamma_{*} \xi)-A_{1}((A_{2}(\pi_{*} \gamma
_{*}\xi))_{Q}) \\
& = A_{1} (\gamma_{*}\xi) - A_{1}((A_{2}(\gamma_{*} \pi_{*} \xi))_{Q}) \\
& = A_{1} (\gamma_{*}\xi) - A_{1}((\mathrm{Ad}_{\gamma}A_{2}(\pi_{*}
\xi))_{Q}) \\
& = A_{1} (\gamma_{*}\xi) - A_{1}(\gamma_{*}(A_{2}(\pi_{*} \xi))_{Q}) \\
& = A_{1} (\xi) - A_{1}((A_{2}(\pi_{*} \xi))_{Q}) = A(\xi),
\end{align*}
where we have used the invariance of $A_{1}$ and the fact that $\mathcal{G}$
acts by automorphisms of $Q\to N$. Then, the form $A$ projects to a one form
$\underline{A}\in\Omega^{1} (Q/\mathcal{G},\mathfrak{g})$.

Given $B\in\mathfrak{g}$, we have
\begin{equation*}
A(B_{Q})=A_{1}(B_{Q})-A_{1}((A_{2}(\pi_{\ast}(B_{Q})))_{Q})=A_{1}(B_{Q})=B,
\end{equation*}
so that $\underline{A}(B_{Q/\mathcal{G}})=B$ as the actions of $\mathcal{G}$
and $G$ commute. Furthermore, the form $A$ is $G$-equivariant. To check that
we consider any $g\in G$ and any tangent vector $\xi\in T_{u}Q$. Then
\begin{equation*}
((R_{g})^{\ast}A)(\xi)=A_{1}((R_{g})_{\ast}(\xi))-A_{1}((A_{2}(\pi_{\ast
}(R_{g})_{\ast}\xi))_{Q})
\end{equation*}
where the vector $(A_{2}(\pi_{\ast}(R_{g})_{\ast}\xi))_{Q}=(A_{2}(\pi_{\ast
}\xi))_{Q}$ has to be evaluated in the point $ug$, as $(R_{g})_{\ast}(\xi)%
\in T_{ug}Q$. Since the action of $\mathcal{G}$ and $G$ commute, that vector
is the right translation of the vector $(A_{2}(\pi_{\ast }\xi))_{Q}$
evaluated in $u$. That is
\begin{align*}
((R_{g})^{\ast}A)(\xi) & =A_{1}((R_{g})_{\ast}(\xi))-A_{1}((R_{g})_{\ast
}(A_{2}(\pi_{\ast}\xi))_{Q}) \\
& =\mathrm{Ad}_{g^{-1}}A_{1}(\xi)-\mathrm{Ad}_{g^{-1}}A_{1}((A_{2}(\pi_{\ast
}(\xi)))_{Q})=\mathrm{Ad}_{g^{-1}}A(\xi).
\end{align*}
The commutativity also yields that $(R_{g})^{\ast}\underline{A}=\mathrm{Ad}%
_{g^{-1}}\underline{A}$, that is, $\underline{A}$ is a connection form.

The horizontal lift $\underline{h}:\mathfrak{X}(N/\mathcal{G})\to \mathfrak{X%
}(Q/\mathcal{G})$ defined by the connection $\underline{A}$ is $\underline{h}%
(X)=(\bar{\pi}_{\mathcal{G}})_{*}(h_{1}h_{2}X)$, where $h_{1}$ and $h_{2}$
are the horizontal lifts of $A_{1}$ and $A_{2}$ respectively. Indeed, we
obviously have that the projection $Q/\mathcal{G}\to N/\mathcal{G}$ sends $%
\underline{h}X$ to $X$. On the other hand
\begin{equation*}
\underline{A}(\underline{h}X)=A(h_{1}h_{2}X)=
A_{1}(h_{1}h_{2}X)-A_{1}((A_{2}(h_{2}X))_{Q})=0,
\end{equation*}
for any $X\in\mathfrak{X}(N/\mathcal{G})$. Now, the curvature $F_{%
\underline {A}}$ of $\underline{A}$ understood as a $\mathfrak{g}$-valued
2-form in $Q/\mathcal{G}$ is
\begin{align*}
F_{\underline{A}}(\underline{h}X,\underline{h}Y) & = - \underline {A}([%
\underline{h}X,\underline{h}Y]) = -A([h_{1}h_{2}X,h_{1}h_{2}Y]) \\
& =
-A_{1}([h_{1}h_{2}X,h_{1}h_{2}Y])+A_{1}((A_{2}(%
\pi_{*}[h_{1}h_{2}X,h_{1}h_{2}Y]))_{Q}) \\
& = -A_{1}([h_{1}h_{2}X,h_{1}h_{2}Y])+A_{1}((A_{2}([h_{2}X,h_{2}Y]))_{Q}) \\
& = F_{A_{1}}(h_{1}h_{2}X,h_{1}h_{2}Y)-A_{1}((F_{A_{2}}(h_{2}X,h_{2}Y))_{Q}).
\end{align*}
If we understood the curvature as a adjoint-valued 2-form, we have the
expression in the statement.

Finally, from the definitions of $\mathrm{C}_{A_{2}}$ and $p_{\mathcal{G}%
,A_{1}}$, the last formula of the statement follows.
\end{proof}

\section{The homology map on the moduli space of flat connections\label%
{connections}}

\subsection{Chern-Simons characters on $M\times \mathcal{A}/\mathcal{G}$}

Given a principal $G$-bundle $\pi \colon P\rightarrow M$, let $\mathcal{G}%
\subset \mathrm{Gau}P$ be a subgroup of $\mathrm{Gau}P$ such that $\mathcal{G%
}$ acts freely on the space of connections $\mathcal{A}$ and $\mathcal{A}%
\rightarrow \mathcal{A}/\mathcal{G}$ is a principal $\mathcal{G}$-bundle.
For example we can take $\mathcal{G}$ the subgroup of gauge transformations
fixing a point $p_{0}\in P$ (see e.g. \cite{Donaldson}).

\begin{remark}
To be precise, $\mathcal{A}$ and $\mathrm{Gau}P$ should be defined as the
complections of the spaces of connections and Gauge transformations with
respect to some Sobolev norms (e.g. see \cite{Donaldson}). We do not enter
in details because this norms do not play any role in our constructions.
\end{remark}

We consider the following diagram of principal bundles
\begin{equation*}
\begin{array}{ccc}
P\times \mathcal{A} & \rightarrow  & (P\times \mathcal{A})/\mathcal{G} \\
\downarrow  &  & \downarrow  \\
M\times \mathcal{A} & \rightarrow  & M\times \mathcal{A}/\mathcal{G}%
\end{array}%
\end{equation*}%
where for the vertical projections, the structure group is $G$ acting on the
first factor, and the structure group is $\mathcal{G}$ acting on the second
factor for the horizontal ones. We regard this diagram as the one given in %
\ref{diagrama} with $Q=P\times \mathcal{A}$ and $N=M\times \mathcal{A}$ and
we are going to apply Proposition \ref{quotcha}.

On one hand, if we consider any connection $\mathfrak{A}$ on the principal $%
\mathcal{G}$-bundle $\mathcal{A}\rightarrow \mathcal{A}/\mathcal{G}$, we can
extend it trivially to $M\times \mathcal{A}\rightarrow M\times \mathcal{A}/%
\mathcal{G}$ and we denote it by the same letter. On the other hand, $%
P\times \mathcal{A}\rightarrow M\times \mathcal{A}$ is equipped with a
natural connection 1-form $\mathbb{A}$. Indeed, given any $(X,Y)\in
T_{(p,A)}P\times \mathcal{A}$, we define $\mathbb{A}((X,Y))=A(X)$. The
connection $\mathbb{A}$ is invariant under the acion of the group of
automorphisms of $P$ ,and hence we can define the $\mathcal{G}$-equivariant
characteristic forms $p_{\mathcal{G},\mathbb{A}}$. If $X\in \mathrm{gau}%
P\simeq \Omega ^{0}(M$,$\mathrm{ad}P)$ we have $v_{\mathbb{A}}(X)=\mathrm{pr}%
_{1}^{\ast }X\in \Omega ^{0}(M\times \mathcal{A}$,$\mathrm{ad}(P\times
\mathcal{A})$, where $\mathrm{pr}_{1}\colon M\times \mathcal{A\rightarrow M}$
denotes the projection. Hence we have
\begin{equation}
p_{\mathcal{G},\mathbb{A}}(X)=p(\mathbb{F}-\mathrm{pr}_{1}^{\ast }X,\ldots ,%
\mathbb{F}-\mathrm{pr}_{1}^{\ast }X)  \label{EquiChar}
\end{equation}

According to Proposition \ref{quotcha}, the connections $\mathbb{A}$ and $%
\mathfrak{A}$ determine a connection $\underline{\mathfrak{A}}$ for the
bundle $P\times \mathcal{A}/\mathcal{G}\rightarrow M\times \mathcal{A}/%
\mathcal{G}$ and we denote by $\underline{\mathfrak{F}}$ its curvature.
Given an invariant polynomial $p\in \mathcal{I}^{r}(G)$, we have the
following identity for its associated characteristic form
\begin{equation}
p(\underline{\mathfrak{F}})=\mathrm{C}_{\mathfrak{A}}(p_{\mathcal{G},\mathbb{%
A}})\in \Omega _{\mathbb{Z}}^{2r}(M\times \mathcal{A}/\mathcal{G}).
\label{F2}
\end{equation}

We assume that $p\in I_{\mathbb{Z}}^{r}(G)$, that is, $p$ defines an entire
characteristic class. We also choose that $\Upsilon \in H^{2r}(\textbf{B}G,\mathbb{Z})
$ compatible with $p$ and we apply the Chern-Simons construction to the
connection $\underline{\mathfrak{A}}$. We thus get a differential character
\begin{equation}
\chi _{\underline{\mathfrak{A}}}\colon Z_{2r-1}(M\times \mathcal{A}/\mathcal{%
G})\rightarrow \mathbb{R}/\mathbb{Z}  \label{deff}
\end{equation}%
determined by $\Upsilon $. Furthermore, we denote by
\begin{equation}
\chi _{\underline{\mathfrak{A}}}^{k}\colon Z_{2r-k-1}(M)\times Z_{k}(%
\mathcal{A}/\mathcal{G})\rightarrow \mathbb{R}/\mathbb{Z}  \label{defff}
\end{equation}%
the restriction of the character $\chi _{\underline{\mathfrak{A}}%
}^{k}(c,s)=\chi _{\underline{\mathfrak{A}}}(c\times s)$ to $(2r-k-1,k)$
chains.

By definition, we have
\begin{equation*}
\chi _{\underline{\mathfrak{A}}}(\partial K)=\int_{K}p(F_{\underline{%
\mathfrak{A}}}),
\end{equation*}%
for all $K\in C_{2r}(M\times \mathcal{A}/\mathcal{G})$ and also, for $c\in
Z_{2r-k-1}(M)$, $u\in C_{2r-k}(M)$, $s\in Z_{k}(\mathcal{A}/\mathcal{G})$, $%
t\in C_{k+1}(\mathcal{A}/\mathcal{G})$,
\begin{align}
\chi _{\underline{\mathfrak{A}}}^{k}(\partial u,s)& =\int_{u\times s}p(%
\underline{\mathfrak{F}})=\int_{u\times s}p(\underline{\mathfrak{F}}%
)^{2r-k,k}  \label{delta1} \\
\chi _{\underline{\mathfrak{A}}}^{k}(c,\partial t)& =\int_{c\times t}p(%
\underline{\mathfrak{F}})=\int_{c\times t}p(\underline{\mathfrak{F}}%
)^{2r-k-1,k+1}  \label{delta2}
\end{align}

\subsection{Restriction to the space of flat connections}

We consider the bigraduation of forms $\Omega ^{k}(M\times \mathcal{A}%
)\simeq \bigoplus_{i=0}^{k}\Omega ^{i,j}(M\times \mathcal{A})$ given by the
product structure. The bigraduation is extended to equivariant forms by
setting $\mathrm{pr}_{1}^{\ast }X\in \Omega ^{0,2}$, i.e. if $\alpha \colon
\mathrm{Lie}\mathcal{G}\rightarrow \Omega ^{i,j}(M\times \mathcal{A})$ is a
polynomial of degree $k$, then $\alpha \in \Omega _{\mathcal{G}%
}^{i,j+2k}(M\times \mathcal{A})$. As $\mathfrak{A}$\ comes from a connection
on $\mathcal{A}\rightarrow \mathcal{A}/\mathcal{G}$, its curvature $%
\mathfrak{F}\in \Omega ^{0,2}(M\times \mathcal{A},\mathfrak{g})$ and the map
$\mathrm{C}_{\underline{\mathfrak{A}}}:\Omega _{\mathcal{G}}^{\bullet
}(M\times \mathcal{A})\rightarrow \Omega ^{\bullet }(M\times \mathcal{A}/%
\mathcal{G})$ preserves this bigraduation. Hence we conclude that
\begin{equation}
p(\underline{\mathfrak{F}})^{i,j}=\mathrm{C}_{\mathfrak{A}}(p_{\mathcal{G},%
\mathbb{A}}^{i,j}).  \label{bigrado}
\end{equation}

The curvature of $\mathbb{A}$ can be decomposed $\mathbb{F=F}^{2,0}+\mathbb{F%
}^{1,1}+\mathbb{F}^{0,2}$, where the fact that $\mathbb{F}$ is a form taking
values in $\mathrm{ad}P$ instead of a standard form does not make any
difference. We use the following result (e.g.see \cite[5.2.3]{Donaldson})

\begin{proposition}
\label{FComp}We have $\mathbb{F}^{0,2}=0$, $\mathbb{F}%
_{(x,A)}^{1,1}(a,Y)=a(Y)$, and $\mathbb{F}_{(x,A)}^{2,0}(Y,Y^{\prime
})=F_{A}(Y,Y^{\prime })$ for $Y,Y^{\prime }\in T_{x}M$, and $a,a^{\prime
}\in T_{A}\mathcal{A}\simeq \Omega ^{1}(M,\mathrm{ad}P)$.
\end{proposition}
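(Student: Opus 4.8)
The plan is to compute the three bigraded components of the curvature $\mathbb{F}=d\mathbb{A}+\tfrac12[\mathbb{A},\mathbb{A}]$ directly from the defining formula $\mathbb{A}_{(p,A)}(X,\dot A)=A(X)$, using the structure equation $\mathbb{F}(U,V)=-\mathbb{A}([U,V])$ valid on $\mathbb{A}$-horizontal fields. A tangent vector of $P\times\mathcal{A}$ at $(p,A)$ splits as $(X,\dot A)$ with $X\in T_pP$ and $\dot A\in T_A\mathcal{A}\simeq\Omega^1(M,\mathrm{ad}P)$, and is $\mathbb{A}$-horizontal exactly when $A(X)=0$. I would organise the computation around two natural families of horizontal fields: the $A$-horizontal lifts $(h_AY,0)$ of vector fields $Y$ on $M$, where $h_A$ denotes horizontal lift into the factor $P$, and the constant fields $(0,a)$ along a fixed $a\in\Omega^1(M,\mathrm{ad}P)$ in the affine factor $\mathcal{A}$; both are $\mathbb{A}$-horizontal.

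For $\mathbb{F}^{0,2}$ I would pair $(0,a)$ with $(0,a')$; since $\mathcal{A}$ is affine these constant fields commute, their bracket vanishes, and hence $\mathbb{F}^{0,2}=0$. For $\mathbb{F}^{2,0}$ I would pair $(h_AY,0)$ with $(h_AY',0)$; as neither field has a component along $\mathcal{A}$, their bracket has no $\mathcal{A}$-component and its $P$-component is the bracket of the $A$-horizontal fields at fixed $A$, so the structure equation for $A$ itself gives $-\mathbb{A}([(h_AY,0),(h_AY',0)])=-A([h_AY,h_AY'])=F_A(h_AY,h_AY')$, which read as an $\mathrm{ad}P$-valued form is $F_A(Y,Y')$.

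The essential step, and the one I expect to be the main obstacle, is $\mathbb{F}^{1,1}$, since here the $A$-dependence of the horizontal distribution of $\mathbb{A}$ enters. I would pair $V=(0,a)$ with $U=(h_AY,0)$ and evaluate $\mathbb{F}^{1,1}(a,Y)=\mathbb{F}(V,U)=-\mathbb{A}([V,U])$ with $[V,U]=\mathcal{L}_VU$. The flow of $V$ is the affine translation $(p,A)\mapsto(p,A+ta)$, whose differential is the identity in each factor, so $[V,U]_{(p,A)}=\tfrac{d}{dt}\big|_{t=0}(h_{A+ta}Y,0)$. Writing $h_{A+ta}Y=\widetilde Y-((A+ta)(\widetilde Y))_P$ for any fixed lift $\widetilde Y$ of $Y$, where $(\,\cdot\,)_P$ is the fundamental vertical field of an element of $\mathfrak g$, the derivative equals the vertical vector $-(a(h_AY))_P$, using that $a$ is horizontal so that $a(\widetilde Y)=a(h_AY)$. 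Applying $\mathbb{A}$ together with $A((\xi)_P)=\xi$ then gives $\mathbb{F}^{1,1}(a,Y)=-\mathbb{A}([V,U])=a(h_AY)$, which as an $\mathrm{ad}P$-valued evaluation is $a(Y)$. This establishes the remaining identity and completes the proposition.
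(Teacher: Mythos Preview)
Your argument is correct. The paper does not actually supply a proof of this proposition; it simply refers to \cite[5.2.3]{Donaldson}, so there is nothing to compare against, and your direct computation via the structure equation $\mathbb{F}(U,V)=-\mathbb{A}([U,V])$ on the two families of $\mathbb{A}$-horizontal extensions $(h_AY,0)$ and $(0,a)$ is the standard route and is carried out cleanly, including the key $(1,1)$ step where the $A$-dependence of $h_A$ produces the vertical correction $-(a(h_AY))_P$.
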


The subspace of flat connections $\mathcal{F}=\{A\in \mathcal{A}%
:F_{A}=0\}\subset \mathcal{A}$ is invariant under the action of the group $%
\mathcal{G}$. Let $\iota _{\mathcal{F}/\mathcal{G}}:M\times \mathcal{F}/%
\mathcal{G}\rightarrow M\times \mathcal{A}/\mathcal{F}$ denote the
inclusion.

\begin{proposition}
\label{porpnul} We have that
\begin{equation}
\iota _{\mathcal{F}/\mathcal{G}}^{\ast }(p(\underline{\mathfrak{F}}%
)^{2r-j,j})=0,\qquad \forall j<r.  \label{crucial2}
\end{equation}
\end{proposition}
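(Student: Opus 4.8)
The plan is to reduce the statement to a bidegree count on the equivariant characteristic form $p_{\mathcal{G},\mathbb{A}}$ upstairs on $M\times\mathcal{A}$, exploiting that the curvature piece $\mathbb{F}^{2,0}$ vanishes on flat connections. By \eqref{EquiChar} we have $p_{\mathcal{G},\mathbb{A}}(X)=p(\mathbb{F}-\mathrm{pr}_1^{\ast}X,\ldots,\mathbb{F}-\mathrm{pr}_1^{\ast}X)$, and by Proposition \ref{FComp} together with the bigraduation convention $\mathrm{pr}_1^{\ast}X\in\Omega^{0,2}$, the argument $\mathbb{F}-\mathrm{pr}_1^{\ast}X$ splits into homogeneous pieces of bidegrees $(2,0)$, $(1,1)$ and $(0,2)$, namely $\mathbb{F}^{2,0}$, $\mathbb{F}^{1,1}$ and $-\mathrm{pr}_1^{\ast}X$ (recall $\mathbb{F}^{0,2}=0$). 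Expanding $p$ multilinearly, a monomial with $a$ factors $\mathbb{F}^{2,0}$, $b$ factors $\mathbb{F}^{1,1}$ and $c$ factors $-\mathrm{pr}_1^{\ast}X$, with $a+b+c=r$, has bidegree $(2a+b,\,b+2c)$.

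Next I would isolate the component of bidegree $(2r-j,j)$. Imposing $2a+b=2r-j$, $b+2c=j$ and $a+b+c=r$ forces $c=a+j-r$, so $c\ge 0$ requires $a\ge r-j$. Since $j<r$ this yields $a\ge 1$: \emph{every} monomial contributing to $p_{\mathcal{G},\mathbb{A}}^{2r-j,j}$ carries at least one factor $\mathbb{F}^{2,0}$. This is the crux of the argument, and it is exactly where the hypothesis $j<r$ enters.

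Now I would restrict to flat connections. Let $\iota_{\mathcal{F}}\colon M\times\mathcal{F}\hookrightarrow M\times\mathcal{A}$ be the inclusion, which is $\mathcal{G}$-equivariant and preserves the bigraduation since it is the identity on $M$. By Proposition \ref{FComp}, $\mathbb{F}^{2,0}_{(x,A)}=F_A$, so $\iota_{\mathcal{F}}^{\ast}\mathbb{F}^{2,0}=0$ because $F_A=0$ for $A\in\mathcal{F}$. As each monomial above contains such a factor, we conclude $\iota_{\mathcal{F}}^{\ast}p_{\mathcal{G},\mathbb{A}}^{2r-j,j}=0$ for $j<r$.

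Finally I would push this down to the quotient using \eqref{bigrado}. The connection $\mathfrak{A}$ restricts to a connection on the invariant subbundle $\mathcal{F}\to\mathcal{F}/\mathcal{G}$ (restricting its defining equations $\mathfrak{A}(X_{\mathcal{A}})=X$ and equivariance to $T\mathcal{F}$), and the Chern--Weil-type map $\mathrm{C}_{\mathfrak{A}}$ is natural with respect to $\iota_{\mathcal{F}}$, since its defining formula $\mathrm{C}_{A}(p\otimes\omega)=p(F_A)\wedge\omega_{\mathrm{hor}A}$ commutes with pullback (both curvature and horizontalization do). Hence
$$\iota_{\mathcal{F}/\mathcal{G}}^{\ast}\big(p(\underline{\mathfrak{F}})^{2r-j,j}\big)=\iota_{\mathcal{F}/\mathcal{G}}^{\ast}\,\mathrm{C}_{\mathfrak{A}}\big(p_{\mathcal{G},\mathbb{A}}^{2r-j,j}\big)=\mathrm{C}_{\mathfrak{A}|_{\mathcal{F}}}\big(\iota_{\mathcal{F}}^{\ast}p_{\mathcal{G},\mathbb{A}}^{2r-j,j}\big)=0.$$
The step needing care is this last descent: one must verify that restricting the equivariant form to $M\times\mathcal{F}$ and then applying the quotient construction for the restricted connection agrees with applying $\mathrm{C}_{\mathfrak{A}}$ and then pulling back along $\iota_{\mathcal{F}/\mathcal{G}}$. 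If one prefers to avoid the restricted connection altogether, I would instead note that $M\times\mathcal{F}\to M\times\mathcal{F}/\mathcal{G}$ is a surjective submersion, so its pullback is injective on forms; it then suffices to check the vanishing after pulling $p(\underline{\mathfrak{F}})^{2r-j,j}$ back to $M\times\mathcal{F}$, which is precisely the equivariant computation carried out above.
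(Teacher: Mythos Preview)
Your proof is correct and follows essentially the same approach as the paper: both arguments rest on a bidegree count for $p_{\mathcal{G},\mathbb{A}}$ showing that its $(2r-j,j)$-component involves $\mathbb{F}^{2,0}$ (which vanishes on $M\times\mathcal{F}$), and then invoke \eqref{bigrado} to pass to the quotient. The only cosmetic difference is that the paper first restricts to $\mathcal{F}$ (so $\mathbb{F}=\mathbb{F}^{1,1}$) and then expands binomially to see the result lands in $\bigoplus_i\Omega^{r-i,r+i}_{\mathcal{G}}$, whereas you expand first and then restrict; your explicit justification of the descent step (naturality of $\mathrm{C}_{\mathfrak{A}}$, or injectivity of pullback along the submersion) is a point the paper passes over in a single sentence.
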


\begin{proof}
By Proposition \ref{FComp}, if $A\in \mathcal{F}$ we have $\mathbb{F}%
_{(x,A)}^{2,0}=0$ for any $x\in M$. Hence on $\mathcal{F}$ we have $\mathbb{F%
}=\mathbb{F}^{1,1}$ and by using equation (\ref{EquiChar}) we obtain%
\begin{align*}
p_{\mathcal{G},\mathbb{A}}|_{_{\mathcal{F}}}(X)& =p(\mathbb{F}^{1,1}-\mathrm{%
pr}_{1}^{\ast }X,\ldots ,\mathbb{F}^{1,1}-\mathrm{pr}_{1}^{\ast }X) \\
& =\sum\limits_{i=0}^{r}(-1)^{i}\tbinom{r}{i}p(\mathrm{pr}_{1}^{\ast }X,%
\overset{(i}{\ldots },\mathrm{pr}_{1}^{\ast }X,\mathbb{F}^{1,1},\overset{(r-i%
}{\ldots },\mathbb{F}^{1,1})\in \bigoplus\limits_{i=0}^{r}\Omega _{\mathcal{G%
}}^{r-i,r+i}(M\times \mathcal{A})
\end{align*}%
as $p(\mathrm{pr}_{1}^{\ast }X,\overset{(i}{\ldots },\mathrm{pr}_{1}^{\ast
}X,\mathbb{F}^{1,1},\overset{(r-i}{\ldots },\mathbb{F}^{1,1})\in \Omega _{%
\mathcal{G}}^{r-i,r+i}(M\times \mathcal{A})$. In particular, $p_{\mathcal{G},%
\mathbb{A}}(X)^{2r-j,j}=0$, for $j<r$. Together with (\ref{bigrado}), we
conclude (\ref{crucial2}).
\end{proof}

We consider the restriction of the character (\ref{defff}) to the moduli
space of flat connections
\begin{equation*}
\left. \chi _{\mathfrak{A}}^{k}\right\vert _{\mathcal{F}/\mathcal{G}}\colon
Z_{2r-k-1}(M)\times Z_{k}(\mathcal{F}/\mathcal{G})\rightarrow \mathbb{R}/%
\mathbb{Z}.
\end{equation*}%
Taking into account (\ref{delta1}), (\ref{delta2}) and Proposition \ref%
{porpnul}, we have
\begin{equation*}
\left. \chi _{\underline{\mathfrak{A}}}^{k}\right\vert _{\mathcal{F}/%
\mathcal{G}}(\partial u,s)=\int_{u\times s}\iota _{\mathcal{F}/\mathcal{G}%
}^{\ast }p(\underline{\mathfrak{F}})=\int_{u\times s}\iota _{\mathcal{F}/%
\mathcal{G}}^{\ast }p(\underline{\mathfrak{F}})^{2r-k,k}=0,
\end{equation*}%
for all $u\in C_{2r-k}(M)$, $s\in Z_{k}(\mathcal{F}/\mathcal{G})$ and $k\leq
r-1$. We also have
\begin{equation*}
\left. \chi _{\underline{\mathfrak{A}}}^{k}\right\vert _{\mathcal{F}/%
\mathcal{G}}(c,\partial t)=\int_{c\times t}\iota _{\mathcal{F}/\mathcal{G}%
}^{\ast }p(\underline{\mathfrak{F}})=\int_{c\times t}\iota _{\mathcal{F}/%
\mathcal{G}}^{\ast }p(\underline{\mathfrak{F}})^{2r-k-1,k+1}=0,
\end{equation*}%
for all $c\in C_{2r-k-1}(M)$, $t\in Z_{k+1}(\mathcal{F}/\mathcal{G})$ and $%
k\leq r-2$. Hence for $k\leq r-2$ the map$\left. \chi _{\underline{\mathfrak{%
A}}}^{k}\right\vert _{\mathcal{F}/\mathcal{G}}$ only depends on the homology
classes and defines a map
\begin{equation*}
\left. \chi _{\underline{\mathfrak{A}}}^{k}\right\vert _{\mathcal{F}/%
\mathcal{G}}\colon H_{2r-k-1}(M)\times H_{k}(\mathcal{F}/\mathcal{G}%
)\rightarrow \mathbb{R}/\mathbb{Z}.
\end{equation*}%
And for $k=r-1$ we have a map
\begin{equation*}
\left. \chi _{\underline{\mathfrak{A}}}^{r-1}\right\vert _{\mathcal{F}/%
\mathcal{G}}\colon H_{r}(M)\times Z_{r-1}(\mathcal{F}/\mathcal{G}%
)\rightarrow \mathbb{R}/\mathbb{Z}.
\end{equation*}

\begin{remark}
For $k=r-1$ we cannot replace $Z_{r-1}(\mathcal{F}/\mathcal{G})$ by $H_{r-1}(%
\mathcal{F}/\mathcal{G})$. For example, if $r=2$ we have $\iota _{\mathcal{F}%
/\mathcal{G}}^{\ast }(\underline{p}_{\mathcal{G}}(X))=p(\mathbb{F}^{1,1}-\pi
^{\ast }X,\mathbb{F}^{1,1}-\pi ^{\ast }X)=p(\mathbb{F}^{1,1},\mathbb{F}%
^{1,1})-2p(\pi ^{\ast }X,\mathbb{F}^{1,1})+p(\pi ^{\ast }X,\pi ^{\ast }X)$,
and the term $p(\mathbb{F}^{1,1},\mathbb{F}^{1,1})\in \Omega _{\mathcal{G}%
}^{2,2}(M\times \mathcal{A})$ is not zero. For example, for $SU(2)$-bundles
over a Riemann surface and $p(X)=\frac{1}{8\pi ^{2}}\mathrm{tr}(X^{2})$, the
form $\sigma =\int_{M}p(\mathbb{F}^{1,1},\mathbb{F}^{1,1})$ gives the
symplectic structure on the moduli space of flat connections defined in \cite%
{AB1} by $\sigma (a,b)=\frac{1}{4\pi ^{2}}\int_{M}\mathrm{tr}(a\wedge b)$
for $a,b\in \Omega ^{1}(M,\mathrm{ad}P)$, and hence $p(\mathbb{F}^{1,1},%
\mathbb{F}^{1,1})$ is not zero.
\end{remark}

\begin{lemma}
\label{Lema1}If $\eta\in\Omega^{0,1}(M\times\mathcal{A}, \mathrm{ad}(P\times%
\mathcal{A}))$  and $F$ is the curvature of the connection $\mathbb{A}+\eta$%
, then we have $F^{2,0}=\mathbb{F}^{2,0}$. In particular, we have ${%
\underline {\mathfrak{F}}}^{2,0}=0$ on $M\times\mathcal{F}/\mathcal{G}$ for
any connection $\mathfrak{A}$ on $\mathcal{A}\rightarrow\mathcal{A}/\mathcal{%
G}$.
\end{lemma}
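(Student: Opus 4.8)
The plan is to use the standard structure equation for the curvature of a perturbed connection and then to carry out a bidegree count, term by term. Writing the connection as $\mathbb{A}+\eta$ and denoting by $F$ its curvature, we have the identity
\begin{equation*}
F=\mathbb{F}+d^{\mathbb{A}}\eta+\tfrac{1}{2}[\eta\wedge\eta],
\end{equation*}
where $d^{\mathbb{A}}$ is the covariant exterior derivative on $\mathrm{ad}(P\times\mathcal{A})$-valued forms determined by $\mathbb{A}$. The entire content of the first assertion is then bidegree bookkeeping: I would argue that neither $d^{\mathbb{A}}\eta$ nor $[\eta\wedge\eta]$ can contribute to the $(2,0)$-part. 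Since $\eta\in\Omega^{0,1}$, the bracket $[\eta\wedge\eta]$ lies in $\Omega^{0,2}$. For the covariant derivative I would note that, on the product $M\times\mathcal{A}$, the ordinary differential splits as $d=d_{M}+d_{\mathcal{A}}$, so $d\eta\in\Omega^{1,1}\oplus\Omega^{0,2}$; moreover $\mathbb{A}$ annihilates every vector tangent to the $\mathcal{A}$-factor (by its definition $\mathbb{A}((X,Y))=A(X)$), so its connection term increments only the $M$-degree and the correction $[\mathbb{A}\wedge\eta]$ lands in $\Omega^{1,1}$. Hence $d^{\mathbb{A}}\eta\in\Omega^{1,1}\oplus\Omega^{0,2}$, and taking the $(2,0)$-component of the displayed identity gives $F^{2,0}=\mathbb{F}^{2,0}$.

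For the ``in particular'' statement I would unwind the construction of $\underline{\mathfrak{A}}$ from Proposition \ref{quotcha}, applied with $A_{1}=\mathbb{A}$ and $A_{2}=\mathfrak{A}$. There the projectable form is $A=\mathbb{A}-\mathbb{A}(\mathfrak{A})$, so the point to verify is that the correction $\eta:=-\mathbb{A}(\mathfrak{A})$ is genuinely of bidegree $(0,1)$. This is exactly where the hypothesis that $\mathfrak{A}$ comes from a connection on $\mathcal{A}\to\mathcal{A}/\mathcal{G}$ (extended trivially) enters: such $\mathfrak{A}$ is of bidegree $(0,1)$ because it annihilates $M$-directions, and since $v_{\mathbb{A}}(\cdot)=\mathrm{pr}_{1}^{\ast}(\cdot)$ takes values in sections of $\mathrm{ad}$, the one-form $\mathbb{A}(\mathfrak{A})(\xi)=v_{\mathbb{A}}(\mathfrak{A}(\pi_{\ast}\xi))$ is again of bidegree $(0,1)$. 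Thus $A=\mathbb{A}+\eta$ with $\eta\in\Omega^{0,1}$, and the first part yields $F^{2,0}=\mathbb{F}^{2,0}$ for the curvature $F$ of $A$.

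Finally I would transfer this to the quotient. As $A$ projects to $\underline{\mathfrak{A}}$ and curvature is natural under bundle morphisms, we have $\pi_{\mathcal{G}}^{\ast}\underline{\mathfrak{F}}=F$ as $\mathrm{ad}$-valued forms on $M\times\mathcal{A}$, where $\pi_{\mathcal{G}}\colon M\times\mathcal{A}\to M\times\mathcal{A}/\mathcal{G}$ is the base projection. Because the $\mathcal{G}$-action is trivial on the $M$-factor, $\pi_{\mathcal{G}}$ respects the product bigraduation, so $\pi_{\mathcal{G}}^{\ast}\underline{\mathfrak{F}}^{2,0}=F^{2,0}=\mathbb{F}^{2,0}$. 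Restricting to $M\times\mathcal{F}$ and invoking Proposition \ref{FComp}, we have $\mathbb{F}^{2,0}_{(x,A)}(Y,Y')=F_{A}(Y,Y')=0$ for flat $A$; hence $\pi_{\mathcal{G}}^{\ast}\underline{\mathfrak{F}}^{2,0}=0$ there, and injectivity of $\pi_{\mathcal{G}}^{\ast}$ (as $\pi_{\mathcal{G}}$ is a submersion) forces $\underline{\mathfrak{F}}^{2,0}=0$ on $M\times\mathcal{F}/\mathcal{G}$. I expect the main obstacle to be not any single computation but the structural verification that the correction term of Proposition \ref{quotcha} carries no $M$-horizontal part and that the bigraduation descends cleanly through $\pi_{\mathcal{G}}$; once these points are secured, the degree count is immediate.
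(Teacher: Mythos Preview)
Your proof is correct and follows essentially the same route as the paper: both use the structure equation $F=\mathbb{F}+d_{\mathbb{A}}\eta+\tfrac{1}{2}[\eta,\eta]$ together with the bidegree count (noting $[\eta,\eta]\in\Omega^{0,2}$ and $(d_{\mathbb{A}}\eta)^{2,0}=0$ since $\eta\in\Omega^{0,1}$), and then identify $\overline{\pi}_{\mathcal{G}}^{\ast}\underline{\mathfrak{A}}=\mathbb{A}-\mathbb{A}(\mathfrak{A})$ with $\mathbb{A}(\mathfrak{A})\in\Omega^{0,1}$ to conclude $\pi_{\mathcal{G}}^{\ast}\underline{\mathfrak{F}}^{2,0}=\mathbb{F}^{2,0}=0$ on $M\times\mathcal{F}$. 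Your version is simply more explicit about why $d_{\mathbb{A}}\eta$ has no $(2,0)$-part and why the bigraduation descends through $\pi_{\mathcal{G}}$, but the argument is the same.
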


\begin{proof}
We have $F=\mathbb{F}+d_{\mathbb{A}}\eta+\frac{1}{2}[\eta,\eta]$, and the
result follows using that $[\eta,\eta]\in\Omega^{0,2}(M\times\mathcal{A})$,
and that $(d_{\mathbb{A}}\eta)^{2,0}=(d\eta+[\mathbb{A},\eta])^{2,0}=0$ as $%
\eta\in\Omega^{0,1}(M\times\mathcal{A})$.

The connections $\mathbb{A}$ and $\overline{\pi }_{\mathcal{G}}^{\ast }%
\underline{\mathfrak{A}}$ are connections on the same bundle $P\times
\mathcal{A}\rightarrow M\times \mathcal{A}$. Furthermore, we have $\overline{%
\pi }_{\mathcal{G}}^{\ast }\underline{\mathfrak{A}}=\mathbb{A}+\eta $ with $%
\eta =-\mathbb{A}(\mathfrak{A})$ for $X\in T(M\times \mathcal{A)}$. As $%
\mathfrak{A}$ comes from a connection on $\mathcal{A}\rightarrow \mathcal{A}/%
\mathcal{G}$ we have $\eta \in \Omega ^{0,1}(M\times \mathcal{A},\mathrm{ad}%
(P\times \mathcal{A}))$, and hence $\pi _{\mathcal{G}}^{\ast }{\underline{%
\mathfrak{F}}}^{2,0}=\mathbb{F}^{2,0}=0$ on $M\times \mathcal{F}$. We
conclude that ${\underline{\mathfrak{F}}}^{2,0}=0$.
\end{proof}

The Chern-Simons characters are defined using a connection $\mathfrak{A}$ on
$\mathcal{A}\rightarrow \mathcal{A}/\mathcal{G}$. Fortunately we have the
following result

\begin{theorem}
\label{th} Let $P\rightarrow M$ be a $G$-principal bundle, $p\in \mathcal{I}%
_{\mathbb{Z}}^{r}(G)$ an invariant polynomial of degree $r$, $\Upsilon $ a
compatible characteristic class and $\mathcal{G}\subset \mathrm{Aut}P$ a
subgroup acting freely on the space of connections $\mathcal{A}$. Then,
following the notation above, the map
\begin{equation*}
\left. \chi _{\underline{\mathfrak{A}}}^{k}\right\vert _{\mathcal{F}/%
\mathcal{G}}\colon Z_{2r-k-1}(M)\times Z_{k}(\mathcal{F}/\mathcal{G}%
)\rightarrow \mathbb{R}/\mathbb{Z}.
\end{equation*}%
does not depend on the chosen connection $\mathfrak{A}$ on $\mathcal{A}%
\rightarrow \mathcal{A}/\mathcal{G}$ for $k<r$.
\end{theorem}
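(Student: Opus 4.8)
The plan is to compare the characters for two connections by means of the transgression formula (\ref{trangression}) and to show that the resulting correction term vanishes on $M\times\mathcal{F}/\mathcal{G}$ through a bidegree count. Let $\mathfrak{A}$ and $\mathfrak{A}'$ be connections on $\mathcal{A}\rightarrow\mathcal{A}/\mathcal{G}$, with induced connections $\underline{\mathfrak{A}}$, $\underline{\mathfrak{A}'}$ on $(P\times\mathcal{A})/\mathcal{G}\rightarrow M\times\mathcal{A}/\mathcal{G}$ furnished by Proposition \ref{quotcha}. Applying (\ref{trangression}) to a cycle $c\times s$ with $c\in Z_{2r-k-1}(M)$ and $s\in Z_k(\mathcal{F}/\mathcal{G})$, and restricting to the flat locus, gives
\[
\chi_{\underline{\mathfrak{A}'}}(c\times s)=\chi_{\underline{\mathfrak{A}}}(c\times s)+\int_{c\times s}\iota_{\mathcal{F}/\mathcal{G}}^{\ast}Tp(\underline{\mathfrak{A}'},\underline{\mathfrak{A}}).
\]
Hence it suffices to prove that the bidegree $(2r-k-1,k)$ component of $\iota_{\mathcal{F}/\mathcal{G}}^{\ast}Tp(\underline{\mathfrak{A}'},\underline{\mathfrak{A}})$ vanishes whenever $k<r$.

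To control that component I would pull everything back to $M\times\mathcal{F}$, where the product bigraduation is available. Since $\bar{\pi}_{\mathcal{G}}\colon P\times\mathcal{A}\rightarrow(P\times\mathcal{A})/\mathcal{G}$ is a $G$-bundle morphism covering $\pi_{\mathcal{G}}$, and the transgression form is natural under bundle morphisms (immediate from its definition (\ref{trangression2})), we have $\pi_{\mathcal{G}}^{\ast}Tp(\underline{\mathfrak{A}'},\underline{\mathfrak{A}})=Tp(\bar{\pi}_{\mathcal{G}}^{\ast}\underline{\mathfrak{A}'},\bar{\pi}_{\mathcal{G}}^{\ast}\underline{\mathfrak{A}})$. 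Exactly as in the proof of Lemma \ref{Lema1}, $\bar{\pi}_{\mathcal{G}}^{\ast}\underline{\mathfrak{A}}=\mathbb{A}+\eta$ and $\bar{\pi}_{\mathcal{G}}^{\ast}\underline{\mathfrak{A}'}=\mathbb{A}+\eta'$ with $\eta,\eta'\in\Omega^{0,1}(M\times\mathcal{A},\mathrm{ad}(P\times\mathcal{A}))$, so by (\ref{trangression2})
\[
\pi_{\mathcal{G}}^{\ast}Tp(\underline{\mathfrak{A}'},\underline{\mathfrak{A}})=r\int_{0}^{1}p(\eta'-\eta,F_{A_t},\overset{(r-1}{\ldots},F_{A_t})\,dt,\qquad A_t=\mathbb{A}+(1-t)\eta'+t\eta.
\]

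Now comes the decisive step, the bidegree bookkeeping. The difference $\eta'-\eta$ has bidegree $(0,1)$. Moreover $A_t=\mathbb{A}+\eta_t$ with $\eta_t=(1-t)\eta'+t\eta\in\Omega^{0,1}$, so Lemma \ref{Lema1} gives $F_{A_t}^{2,0}=\mathbb{F}^{2,0}$, which vanishes on $M\times\mathcal{F}$ by Proposition \ref{FComp}; hence there $F_{A_t}=F_{A_t}^{1,1}+F_{A_t}^{0,2}$ carries $M$-degree at most $1$. Each summand of the integrand then has $M$-degree $0$ from $\eta'-\eta$ plus at most $1$ from each of the $r-1$ curvature factors, so its total $M$-degree is at most $r-1$. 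The component of bidegree $(2r-k-1,k)$ requires $M$-degree $2r-k-1$, and since $k<r$ forces $2r-k-1>r-1$, that component of $\pi_{\mathcal{G}}^{\ast}Tp(\underline{\mathfrak{A}'},\underline{\mathfrak{A}})$ vanishes on $M\times\mathcal{F}$. Because the quotient projections are submersions, their pullbacks are injective and preserve the product bigraduation; hence the $(2r-k-1,k)$ component of $\iota_{\mathcal{F}/\mathcal{G}}^{\ast}Tp(\underline{\mathfrak{A}'},\underline{\mathfrak{A}})$ vanishes on $M\times\mathcal{F}/\mathcal{G}$, the correction integral is $0$, and $\chi_{\underline{\mathfrak{A}'}}^{k}|_{\mathcal{F}/\mathcal{G}}=\chi_{\underline{\mathfrak{A}}}^{k}|_{\mathcal{F}/\mathcal{G}}$ for $k<r$.

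The main obstacle is precisely this bidegree estimate, which rests on two facts that must be combined carefully: that the difference of the two induced connections is purely of type $(0,1)$ after being pulled back to $M\times\mathcal{A}$ (because both descend from $\mathcal{A}\rightarrow\mathcal{A}/\mathcal{G}$), and that flatness kills the $(2,0)$ part of every interpolating curvature via Lemma \ref{Lema1}. Together these cap the available $M$-degree at $r-1$, which is exactly the threshold $2r-k-1\le r-1$ that fails once $k\ge r$, consistent with the fact that the construction degenerates there.
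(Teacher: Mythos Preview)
Your argument is correct and follows essentially the same route as the paper: apply the transgression formula (\ref{trangression}), observe that the difference of the induced connections has bidegree $(0,1)$, invoke Lemma \ref{Lema1} to kill the $(2,0)$ part of the interpolating curvatures over the flat locus, and conclude by a bidegree count that the $(2r-1-k,k)$ component of the transgression form vanishes for $k<r$. The only cosmetic difference is that you pull everything back to $M\times\mathcal{A}$ before doing the bookkeeping, whereas the paper works directly on $M\times\mathcal{A}/\mathcal{G}$ (using that the interpolated connection $(1-t)\underline{\mathfrak{A}}'+t\underline{\mathfrak{A}}$ is itself of the form $\underline{\mathfrak{A}_t}$, so Lemma \ref{Lema1} applies on the quotient); both ways are equivalent since $\pi_{\mathcal{G}}$ preserves the product bigraduation and its pullback is injective.
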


\begin{proof}
Let $\mathfrak{A}$, $\mathfrak{A}^{\prime}$ be two connections on $\mathcal{A%
}\rightarrow\mathcal{A}/\mathcal{G}$ and $\underline{\mathfrak{A}}$, $%
\underline{\mathfrak{A}}^{\prime}$ the induced connection on the bundle $%
P\times\mathcal{A}/\mathcal{G}\rightarrow M\times\mathcal{A}/\mathcal{G}$
defined in Proposition \ref{quotcha}.

By the properties of the Chern-Simons differential characters we have
\begin{equation*}
\chi _{\underline{\mathfrak{A}}^{\prime }}(K)=\chi _{\underline{\mathfrak{A}}%
}(K)+\int_{K}Tp(\underline{\mathfrak{A}}^{\prime },\underline{\mathfrak{A}}),
\end{equation*}%
and hence%
\begin{equation}
\chi _{\underline{\mathfrak{A}}^{\prime }}^{k}(c,s)=\chi _{\underline{%
\mathfrak{A}}}^{k}(c,s)+\int_{c\times s}Tp(\underline{\mathfrak{A}}^{\prime
},\underline{\mathfrak{A}})=\chi _{\underline{\mathfrak{A}}%
}^{k}(c,s)+\int_{c\times s}Tp(\underline{\mathfrak{A}}^{\prime },\underline{%
\mathfrak{A}})^{2r-1-k,k}.  \label{CambioC}
\end{equation}

We have $Tp(\underline{\mathfrak{A}}^{\prime },\underline{\mathfrak{A}}%
)=r\int_{0}^{1}p(\underline{\mathfrak{A}}^{\prime }-\underline{\mathfrak{A}}%
,F_{t},\overset{(r-1)}{\ldots },F_{t})dt$ with $\eta =\underline{\mathfrak{A}%
}^{\prime }-\underline{\mathfrak{A}}$ and $F_{t}$ the curvature of $(1-t)%
\underline{\mathfrak{A}}^{\prime }+t\underline{\mathfrak{A}}$. As $\mathfrak{%
A}$ and $\mathfrak{A}^{\prime }$ come from connections on $\mathcal{A}%
\rightarrow \mathcal{A}/\mathcal{G}$ we have $\underline{\mathfrak{A}}%
^{\prime }-\underline{\mathfrak{A}}\in \Omega ^{0,1}(M\times \mathcal{A}/%
\mathcal{G},\mathrm{ad}(P\times \mathcal{A}/\mathcal{G)})$. By Lemma \ref%
{Lema1} we have $F_{t}^{2,0}=0$ on $M\times \mathcal{F}/\mathcal{G}$. Hence $%
Tp(\underline{\mathfrak{A}},\underline{\mathfrak{A}}^{\prime })^{2r-1-k,k}=0$
for $k<r$, and the proof is complete.
\end{proof}

Since the map $\left. \chi _{\underline{\mathfrak{A}}}^{k}\right\vert _{%
\mathcal{F}/\mathcal{G}}$ does not depend on the connection $\mathfrak{A}$,
we simplify the notation and directly write
\begin{equation}
\chi ^{k}\colon H_{2r-k-1}(M)\times H_{k}(\mathcal{F}/\mathcal{G}%
)\rightarrow \mathbb{R}/\mathbb{Z},\qquad k\leq r-2,  \label{mainobject}
\end{equation}%
and
\begin{equation}
\chi ^{r-1}\colon H_{r}(M)\times Z_{r-1}(\mathcal{F}/\mathcal{G})\rightarrow
\mathbb{R}/\mathbb{Z},  \label{mainobject2}
\end{equation}%
as the result of the constructions developed in this section.

Finally, the following result shows that $\chi ^{k}(c,s)$ coincides with the
homology map defined in \cite{BC} when $s\in Z_{\bullet }(\mathcal{F}/%
\mathcal{G})$ is a projection of a cycle $\bar{s}$ in $\mathcal{F}$. We
denote by $\pi _{\mathcal{G}}\colon M\times \mathcal{A}\rightarrow M\times
\mathcal{A}/\mathcal{G}$ the projection.

\begin{proposition}
For $c\in Z_{2r-1-k}(M)$ and $\bar{s}\in Z_{k}(\mathcal{F})$, let $\bar{t}%
\in C_{k+1}(\mathcal{A})$ be a chain that satisfies $\partial \bar{t}=\bar{s}
$ (it exists because $\mathcal{A}$ is contractible) and set $s=\pi _{%
\mathcal{G}}\circ \bar{s}\in Z_{k}(\mathcal{F}/\mathcal{G})$. Then we have
\begin{equation*}
\chi^{k}(c,s)=\int_{c\times \bar{t}}p(\mathbb{F}),\qquad k<r.
\end{equation*}
\end{proposition}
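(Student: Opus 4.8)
The plan is to reduce the left-hand side to a curvature integral via the defining property of the character, push that integral up to $M\times\mathcal{A}$ through the projection $\pi_{\mathcal{G}}$, and then trade the pulled-back curvature for $p(\mathbb{F})$ at the cost of a transgression term that I will show integrates to zero over $c\times\bar{s}$. First I would set $t=\pi_{\mathcal{G}}\circ\bar{t}\in C_{k+1}(\mathcal{A}/\mathcal{G})$, so that $\partial t=\pi_{\mathcal{G}}\circ\partial\bar{t}=\pi_{\mathcal{G}}\circ\bar{s}=s$. The point is that $s$ bounds $t$ already inside $\mathcal{A}/\mathcal{G}$ (not merely inside $\mathcal{F}/\mathcal{G}$), so the full character applies and equation (\ref{delta2}) gives
\begin{equation*}
\chi^{k}(c,s)=\chi_{\underline{\mathfrak{A}}}^{k}(c,\partial t)=\int_{c\times t}p(\underline{\mathfrak{F}}).
\end{equation*}
Since $c\times t=\pi_{\mathcal{G}}\circ(c\times\bar{t})$ as a singular chain, naturality of integration rewrites this as $\int_{c\times\bar{t}}\pi_{\mathcal{G}}^{\ast}p(\underline{\mathfrak{F}})$.

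Next I would identify the pulled-back form. The square (\ref{diagrama}) realizes $P\times\mathcal{A}\to M\times\mathcal{A}$ as the pullback of $(P\times\mathcal{A})/\mathcal{G}\to M\times\mathcal{A}/\mathcal{G}$ along $\pi_{\mathcal{G}}$, and by Proposition \ref{quotcha} together with Lemma \ref{Lema1} the pulled-back connection is $\bar{\pi}_{\mathcal{G}}^{\ast}\underline{\mathfrak{A}}=\mathbb{A}+\eta$ with $\eta=-\mathbb{A}(\mathfrak{A})\in\Omega^{0,1}$. Functoriality of Chern--Weil therefore gives $\pi_{\mathcal{G}}^{\ast}p(\underline{\mathfrak{F}})=p(F_{\mathbb{A}+\eta})$. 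The connections $\mathbb{A}+\eta$ and $\mathbb{A}$ on $P\times\mathcal{A}\to M\times\mathcal{A}$ are compared by the transgression identity $p(F_{\mathbb{A}+\eta})-p(\mathbb{F})=d\,Tp(\mathbb{A}+\eta,\mathbb{A})$, so Stokes' theorem and $\partial(c\times\bar{t})=\pm\,c\times\bar{s}$ (recall $\partial c=0$ and $\partial\bar{t}=\bar{s}$) yield
\begin{equation*}
\int_{c\times\bar{t}}p(F_{\mathbb{A}+\eta})=\int_{c\times\bar{t}}p(\mathbb{F})\pm\int_{c\times\bar{s}}Tp(\mathbb{A}+\eta,\mathbb{A}).
\end{equation*}
The overall sign is irrelevant because the last integral will vanish.

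The heart of the proof, and the only place where the hypotheses $\bar{s}\in Z_{k}(\mathcal{F})$ and $k<r$ are used, is showing $\int_{c\times\bar{s}}Tp(\mathbb{A}+\eta,\mathbb{A})=0$; this is the step I expect to require the most care, via a bidegree count on $M\times\mathcal{F}$. By (\ref{trangression2}) one has $Tp(\mathbb{A}+\eta,\mathbb{A})=r\int_{0}^{1}p(\eta,F_{A_{t}},\ldots,F_{A_{t}})\,dt$ with $A_{t}=\mathbb{A}+(1-t)\eta$; since $(1-t)\eta\in\Omega^{0,1}$, Lemma \ref{Lema1} gives $F_{A_{t}}^{2,0}=\mathbb{F}^{2,0}=0$ on $M\times\mathcal{F}$, so on that locus each $F_{A_{t}}$ has $M$-degree at most $1$ while $\eta$ has $M$-degree $0$. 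Hence the restriction of $Tp(\mathbb{A}+\eta,\mathbb{A})$ to $M\times\mathcal{F}$ has $M$-degree at most $r-1$. But the chain $c\times\bar{s}\subset M\times\mathcal{F}$ has $M$-dimension $2r-1-k$, and $2r-1-k\geq r>r-1$ exactly because $k<r$; thus the component of bidegree $(2r-1-k,k)$ that alone could contribute to $\int_{c\times\bar{s}}$ is absent, and the integral vanishes. Combining the displayed identities gives $\chi^{k}(c,s)=\int_{c\times\bar{t}}p(\mathbb{F})$, as claimed.
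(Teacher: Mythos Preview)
Your proof is correct and follows essentially the same route as the paper: define $t=\pi_{\mathcal{G}}\circ\bar t$, apply (\ref{delta2}) and pull back, compare $\pi_{\mathcal{G}}^{\ast}p(\underline{\mathfrak{F}})$ with $p(\mathbb{F})$ via the transgression form, and kill the boundary term on $c\times\bar s$ by the bidegree count coming from Lemma~\ref{Lema1}. The only cosmetic difference is that you write the transgression as $Tp(\mathbb{A}+\eta,\mathbb{A})$ whereas the paper writes $Tp(\mathbb{A},\pi_{\mathcal{G}}^{\ast}\underline{\mathfrak{A}})$, which are the same form up to the irrelevant sign you already noted.
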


\begin{proof}
We define $t=\pi _{\mathcal{G}}\circ \bar{t}\in C_{k+1}(\mathcal{A}/\mathcal{%
G})$ so that $\partial t=s$. As the connections $\mathbb{A}$ and $\pi _{%
\mathcal{G}}^{\ast }\underline{\mathfrak{A}}$ are connections on the same
bundle $P\times \mathcal{A}\rightarrow M\times \mathcal{A}$, we have $p(%
\mathbb{F})-\pi _{\mathcal{G}}^{\ast }p(\underline{\mathfrak{F}})=d(Tp(%
\mathbb{A},\pi _{\mathcal{G}}^{\ast }\underline{\mathfrak{A}}))$, and using
equation (\ref{delta2}) we obtain
\begin{align*}
\chi _{\underline{\mathfrak{A}}}^{k}(c,s)& =\chi _{\underline{\mathfrak{A}}%
}^{k}(c,\partial t)=\int_{c\times t}p(F_{\underline{\mathfrak{A}}%
})=\int_{c\times \bar{t}}\pi _{\mathcal{G}}^{\ast }p(F_{\underline{\mathfrak{%
A}}}) \\
& =\int_{c\times \bar{t}}p(\mathbb{F})+\int_{c\times \bar{s}}Tp(\mathbb{A}%
,\pi _{\mathcal{G}}^{\ast }\underline{\mathfrak{A}})=\int_{c\times \bar{t}}p(%
\mathbb{F})+\int_{c\times \bar{s}}Tp(\mathbb{A},\pi _{\mathcal{G}}^{\ast }%
\underline{\mathfrak{A}})^{2r-1-k,k}.
\end{align*}%
We have $Tp(\mathbb{A},\pi _{\mathcal{G}}^{\ast }\underline{\mathfrak{A}}%
)=r\int_{0}^{1}p(\eta ,F_{t},\overset{(r-1)}{\ldots },F_{t})dt$ with $F_{t}$
the curvature of $\mathbb{A}+t\eta $. By Proposition \ref{quotcha}\ we have $%
\pi _{\mathcal{G}}^{\ast }\underline{\mathfrak{A}}=\mathbb{A}+\eta $ with $%
\eta =-\mathbb{A}(\mathfrak{A})$ for $X\in T(M\times \mathcal{A)}$. As $%
\mathfrak{A}$ comes from a connection on $\mathcal{A}\rightarrow \mathcal{A}/%
\mathcal{G}$ we have $\eta \in \Omega ^{0,1}(M\times \mathcal{A},\mathrm{ad}%
\mathbf{(}P\times \mathcal{A)})$. By Lemma \ref{Lema1}\ it follows that $%
F_{t}^{2,0}=0$ on $M\times \mathcal{F}$, and hence $Tp(\mathbb{A},\pi _{%
\mathcal{G}}^{\ast }\underline{\mathfrak{A}})^{2r-1-k,k}=0$ for $k<r$.
\end{proof}

\section{Construction using the bundle of connections}

In the preceding section our basic result Proposition \ref{porpnul} is
proved using the expression of the curvature $\mathbb{F}$ in the infinite
dimensional manifold $M\times \mathcal{A}$. In this Section we show that in
accordance with the results of \cite{equiconn} Proposition \ref{porpnul} can
be also obtained by studying the equivariant characteristic classes of the
finite dimensional bundle of connections $C(P)$. In our case both approaches
are equivalent, but we recall that for example, in the study of locality in
anomaly cancellation it is fundamental the fact the equivariant forms can be
obtained from the bundle of connections (see \cite{anomalies} for details).

\subsection{Bundle of connections\label{sec1}}

Given a fiber bundle $\pi: E\to M$, an Ehresmann connection is a section of
the jet bundle $J^{1} E\to E$ of local sections of $E\to M$. Geometrically,
this section $\varsigma:E\to J^{1}E$ assigns to every point $y\in E$, a
complementary subspace $H_{y}E=s_{*}(T_{x} M)\subset T_{y}E$, of the
vertical subspace $V_{y}E\subset T_{y} E$, where $\varsigma(y) = j^{1} _{x} s
$, $s(x)=y$ and $V_{y}E=\mathrm{ker}\,\pi_{*}$. If the bundle $E\to M$ is a $%
G$-principal bundle $P\to M$, connections are assumed to be $G$-invariant in
the sense that $(R_{g})_{*}H_{y}P=H_{R_{g}(y)}P$, for every $g\in G$, where $%
R_{g}:P\to P$ is the right action of $G$ on $P$. Therefore, principal
connections are sections of the quotient bundle $(J^{1}P)/G\to P/G=M$, where
the action $R_{g}$ lifts to the jet space in the natural way, i.e., $%
R_{g}(j^{1}_{x} s)=j^{1}_{x} (R_{g}\circ s)$. This bundle is called the
bundle of connections and is denoted by $q: C(P)\to M$.

Since, on one hand, $J^{1}P\to P$ is an affine bundle modeled by the vector
bundle $T^{*}M\otimes VP\to P$ and, on the other hand, we have a natural
identification
\begin{equation}  \label{idd}
P\times\mathfrak{g}= VP, \qquad(y,\xi)\mapsto\left. \frac {d}{d\epsilon}%
\right| _{\epsilon=0} R_{\mathrm{exp}\epsilon\xi}y,
\end{equation}
the bundle of connections is also an affine bundle, modeled by the vector
bundle $T^{*}M\otimes\mathrm{ad}P\to M$, where $\mathrm{ad}P=(P\times
\mathfrak{g})/G$ is the adjoint bundle, that is, the associated bundle to $P$
by the adjoint action of $G$ on its Lie algebra $\mathfrak{g}$. Then we have
that $\mathrm{dim}C(P)=n + nm$, $n=\mathrm{dim}M$, $m=\mathrm{dim}G$.

The principal $G$-bundle $J^{1} P\to C(P)$ is equipped with a canonical
(tautological) principal connection form $\mathbf{A}$ (for example, see \cite%
{geoconn}). This form is the contact form of the jet bundle defined as $%
\mathbf{A}(v)=\xi\in\mathfrak{g}$, $v\in T_{j^{1}_{x} s} (J^{1}P)$, where $%
\xi$ is the element of the Lie algebra associated to $(\pi_{10})_{*}v-(s%
\circ\pi_{1})_{*}v\in V_{s(x)}P$ by the isomorphism \eqref{idd}, and the
source and target projections of the jet bundle are $\pi_{1} :J^{1}P\to M$, $%
\pi_{10}:J^{1}P\to P$ respectively. The connection form $\mathbf{F}$ of $%
\mathbf{A}$ can be seen as a 2-form in $C(P)$ taking values in the adjoint
bundle of $J^{1}P\to C(P)$. It is easy to check that the bundles $q^{*}P\to
C(P)$ and $J^{1}P\to C(P)$ are naturally diffeomorphic, so that $\mathbf{F}%
\in\Omega(C(P),q^{*}\mathrm{ad}P)$. This canonical curvature form satisfies
that, given a section $\sigma:M\to C(P)$ of the bundle of connections, that
is, a connection $A_{\sigma}$ on $P\to M$, the pull-back $\sigma ^{*}(%
\mathbf{F})\in\Omega^{2}(M,\mathrm{ad}P)$ is the curvature 2-form $%
F_{A_{\sigma}}$ of $A_{\sigma}$.

Coordinates in $C(P)$ are defined as follows. Let $(x^{1},...,x^{n})$ be a
coordinate system on a domain $U\subset M$ where the principal bundle is
trivializable, $\pi^{-1}(U)=U\times G$. The trivialization induces a (flat)
connection on $\pi^{-1}(U)\to U$ and therefore a section $\sigma_{U} : U\to
C(P)|_{U}$ of the bundle of connection. With this section, we identify the
affine bundle $C(P)|_{U}\to U$ with $(T^{*}M\otimes\mathrm{ad}P)|_{U} \to U$%
. If $(B_{1},\ldots,B_{m})$ is a basis of $\mathfrak{g}$, the set $(\tilde {B%
}_{1},...,\tilde{B}_{m})$ is a basis of section of $\mathrm{ad}P|_{U}\to U$,
where $\tilde{B}_{i}(x)=((x,g),\mathrm{Ad}_{g}B_{i})_{G}\in\mathrm{ad}P_{x}$%
, $x\in U$. Coordinates $(A^{\alpha}_{j})$, $j=1,...,n$, $\alpha= 1,...,m$,
are defined as $A = A^{\alpha}_{j} (A) dx^{j}\otimes\tilde{B}_{\alpha}$ for
any $A \in T^{*}M\otimes\mathrm{ad}P|_{U}\simeq C(P)|_{U}$. It is not very
complicated to check (for instance, see \cite{geoconn}) that in this
coordinate system the canonical curvature from takes the expression
\begin{equation*}
\mathbf{F}=\left( dA_{j}^{\alpha}\wedge
dx^{j}+c_{\beta\gamma}^{\alpha}(A_{i}^{\beta}A_{j}^{\gamma}-A_{j}^{%
\beta}A_{i}^{\gamma})dx^{i}\wedge dx^{j}\right) \otimes\tilde{B}_{\alpha}.
\end{equation*}

Given any fiber bundle $\pi :E\rightarrow M$, a differential form $\eta \in
\Omega ^{\bullet }(J^{1}E)$ is said to be horizontal or $0$-contact if $%
i_{Y}\eta =0$ for every vector $Y\in T(J^{1}E)$ vertical with respect to the
projection $\pi _{1}:J^{1}E\rightarrow M$. On the other hand, $\eta $ is
said to be contact if $(j^{1}s)^{\ast }\eta =0$, for every (local) section $s
$ of $\pi $. Given any $r$-form $\omega $ on $E$, its pull-back $\pi
_{10}^{\ast }\omega \in \Omega ^{\bullet }(J^{1}E)$ can be decomposed as the
sum of a horizontal and a contact form in a unique way. This splitting can
be further refined. A contact form is said to be $k$-contact if for every
vertical vector $Y$, the form $i_{Y}\omega $ is $(k-1)$-contact. Then, as
proved in \cite{krupka}, every $r$-form $\omega \in \Omega ^{\bullet }(E)$
admits a unique decomposition
\begin{equation*}
\pi _{10}^{\ast }\omega =\sum_{s=0}^{r}\omega ^{s,r-s},
\end{equation*}%
where $\omega ^{s,r-s}\in \Omega ^{\bullet }(J^{1}E)$ is $s$-contact. In the
case where $E=C(P)$, it is easy to check that the decomposition of the
canonical curvature $\mathbf{F}$ reads
\begin{align}
\mathbf{F}^{2,0}& ={\textstyle\sum\limits_{i<j}}\left( A_{j,i}^{\alpha
}-A_{i,j}^{\alpha }+c_{\beta \gamma }^{\alpha }(A_{i}^{\beta }A_{j}^{\gamma
}-A_{j}^{\beta }A_{i}^{\gamma })\right) dx^{i}\wedge dx^{j}\otimes \tilde{B}%
_{\alpha },  \label{flatcoord} \\
\mathbf{F}^{1,1}& =(dA_{i}^{\alpha }-A_{i,j}^{\alpha }dx^{j})\wedge
dx^{i}\otimes \tilde{B}_{\alpha },  \notag \\
\mathbf{F}^{0,2}& =0,  \notag
\end{align}%
where the fact that $\mathbf{F}$ is a form taking values in $q^{\ast }%
\mathrm{ad}P$ instead of a standard form does not make any difference.

We consider the evaluation map
\begin{eqnarray*}
\mathrm{ev}\colon M\times \mathcal{A} &\rightarrow &C(P) \\
(x,A) &\mapsto &\sigma _{A}(x).
\end{eqnarray*}

\begin{proposition}
Let $\mathbf{A}$ be the canonical connection on the bundle $%
J^{1}P\rightarrow C(P)$. For any subgroup $\mathcal{G}\subset \mathrm{Aut}P$%
, we consider its natural action on $J^{1}P\rightarrow C(P)$. Then, we have
the following identity of equivariant characteristic forms%
\begin{equation*}
\mathrm{ev}^{\ast }p_{\mathcal{G},\mathbf{A}}=p_{\mathcal{G},\mathbb{A}}.
\end{equation*}
\end{proposition}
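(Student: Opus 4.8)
The plan is to exhibit the evaluation map as the base map of a $G$-equivariant principal bundle morphism and then transport the pointwise identity $\widetilde{\mathrm{ev}}^{\ast}\mathbf{A}=\mathbb{A}$ into an identity of equivariant characteristic forms. First I would construct the lift. Using the canonical diffeomorphism $J^{1}P\cong q^{\ast}P$ recalled above, set
\[
\widetilde{\mathrm{ev}}\colon P\times\mathcal{A}\to J^{1}P,\qquad \widetilde{\mathrm{ev}}(p,A)=(\sigma_{A}(\pi(p)),p)\in q^{\ast}P .
\]
This covers $\mathrm{ev}$ since $q\circ\mathrm{ev}=\mathrm{pr}_{1}$, it is a morphism of principal $G$-bundles (equivariant in the $P$-factor, identity on the $C(P)$-component), and it is $\mathcal{G}$-equivariant: an automorphism $\phi\in\mathcal{G}\subset\mathrm{Aut}P$ carries $\sigma_{A}$ to the section representing $\phi\cdot A$, which is precisely the natural action of $\mathcal{G}$ on $C(P)$ and on $J^{1}P$.

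The key step, which I expect to be the main obstacle, is to prove $\widetilde{\mathrm{ev}}^{\ast}\mathbf{A}=\mathbb{A}$. Fix $(p,A)$ with $\pi(p)=x$ and $(X,Y)\in T_{(p,A)}(P\times\mathcal{A})$. Since $\pi_{10}\circ\widetilde{\mathrm{ev}}$ is the projection onto $P$, the target derivative is $(\pi_{10})_{\ast}\widetilde{\mathrm{ev}}_{\ast}(X,Y)=X$, independent of $Y$; likewise $(\pi_{1})_{\ast}\widetilde{\mathrm{ev}}_{\ast}(X,Y)=\pi_{\ast}X$. Writing $\widetilde{\mathrm{ev}}(p,A)=j^{1}_{x}s_{A}$ with $s_{A}(x)=p$ a section representing $A$, the defining formula $\mathbf{A}(v)=\xi$ gives $\xi\leftrightarrow X-(s_{A})_{\ast}\pi_{\ast}X$ under the isomorphism (\ref{idd}). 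The point is that $(s_{A})_{\ast}\pi_{\ast}X$ is exactly the $A$-horizontal lift of $\pi_{\ast}X$ at $p$, so $X-(s_{A})_{\ast}\pi_{\ast}X$ is the $A$-vertical part of $X$, whose Lie-algebra representative is $A(X)$. Hence $\widetilde{\mathrm{ev}}^{\ast}\mathbf{A}(X,Y)=A(X)=\mathbb{A}((X,Y))$, and in particular the deformation direction $Y$ drops out, matching the definition of $\mathbb{A}$. The care needed here lies in checking that the contact/vertical decomposition of the jet and the vertical-lift identification (\ref{idd}) indeed reproduce $A$ on horizontal vectors while annihilating $Y$.

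Granting this, the conclusion follows formally. Pulling back curvatures yields $\mathrm{ev}^{\ast}\mathbf{F}=\mathbb{F}$ as $\mathrm{ad}$-valued $2$-forms, after identifying $\mathrm{ev}^{\ast}q^{\ast}\mathrm{ad}P=\mathrm{pr}_{1}^{\ast}\mathrm{ad}P$ via $q\circ\mathrm{ev}=\mathrm{pr}_{1}$. For the moment-map terms, $\mathcal{G}$-equivariance of $\widetilde{\mathrm{ev}}$ means $\widetilde{\mathrm{ev}}_{\ast}$ sends the fundamental field $X_{P\times\mathcal{A}}$ to $X_{J^{1}P}$; evaluating the identity $\widetilde{\mathrm{ev}}^{\ast}\mathbf{A}=\mathbb{A}$ on these fields then gives $\mathrm{ev}^{\ast}v_{\mathbf{A}}(X)=v_{\mathbb{A}}(X)$ for every $X\in\mathrm{Lie}\,\mathcal{G}$. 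Since by (\ref{equivariant_form}) the form $p_{\mathcal{G},\mathbf{A}}(X)=p(\mathbf{F}-v_{\mathbf{A}}(X),\ldots,\mathbf{F}-v_{\mathbf{A}}(X))$ is a fixed polynomial in $\mathbf{F}$ and $v_{\mathbf{A}}(X)$, and pullback commutes with $p$ and with wedge products, substituting the two identities gives $\mathrm{ev}^{\ast}p_{\mathcal{G},\mathbf{A}}(X)=p(\mathbb{F}-v_{\mathbb{A}}(X),\ldots,\mathbb{F}-v_{\mathbb{A}}(X))=p_{\mathcal{G},\mathbb{A}}(X)$, which is the claimed identity.
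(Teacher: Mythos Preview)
Your proposal is correct and follows essentially the same route as the paper: construct a $\mathcal{G}$-equivariant $G$-bundle morphism $\widetilde{\mathrm{ev}}$ covering $\mathrm{ev}$ (the paper writes it as $\overline{\mathrm{ev}}(p,A)=\mathrm{Hor}^{A}_{p}$, which under $J^{1}P\cong q^{\ast}P$ is exactly your $(\sigma_{A}(\pi(p)),p)$), verify $\widetilde{\mathrm{ev}}^{\ast}\mathbf{A}=\mathbb{A}$, and conclude. You supply more detail than the paper, which leaves the key pullback identity as ``easy to check'' and does not spell out the moment-map comparison, but the strategy is identical.
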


\begin{proof}
The map $\mathrm{ev}$ is $\mathrm{Aut}P$-equivariant so that it defines a
map
\begin{equation*}
\mathrm{ev}^* :\Omega ^\bullet _\mathcal{G}(C(P)) \to \Omega ^\bullet _%
\mathcal{G}(M\times \mathcal{A}).
\end{equation*}
If we consider another evaluation map $\overline{\mathrm{ev}}:P\times
\mathcal{A}\to J^1P$, $\overline{\mathrm{ev}}(p,A)=\mathrm{Hor}^A_p$, where $%
\mathrm{Hor}^A_p :T_xM\to T_pP$, $\pi(p)=x$, is the horizontal lift to $p$
defined by $A$, we have the diagram
\begin{equation*}
\begin{array}{ccc}
P\times \mathcal{A} & \overset{\overline{\mathrm{ev}}}{\rightarrow} & J^1P
\\
\downarrow &  & \downarrow \\
M\times \mathcal{A} & \overset{\mathrm{ev}}{\rightarrow} & C(P)%
\end{array}%
\end{equation*}
It is easy to check that the pullback of the connection form $\overline{%
\mathrm{ev}}^*\mathbf{A}$ is exactly the connection form $\mathbb{A}$ in $%
P\times \mathcal{A}\to M\times\mathcal{A}$ introduced above. This completes
the proof.
\end{proof}

The evaluation map can be naturally extended to the jet space $J^{1}C(P)$
\begin{align*}
\mathrm{ev}_{1}& \colon M\times \mathcal{A}\rightarrow J^{1}C(P) \\
(x,A)& \mapsto j_{x}^{1}A
\end{align*}%
and (e.g. see \cite{VB}) the map $\mathrm{ev}_{1}$ is compatible with the
bigraduation of forms on $M\times \mathcal{A}$ given by the product
structure and the bigraduation $\Omega ^{i,j}(C(P))$ on $J^{1}C(P)$ of
contact forms, that is,
\begin{equation*}
\mathrm{ev}_{1}^{\ast }(\Omega ^{i,j}(C(P))\subset \Omega ^{i,j}(M\times
\mathcal{A}).
\end{equation*}

The bigraduation is extended to equivariant forms by setting $\pi ^{\ast
}X\in \Omega ^{0,2}$, i.e. if $\alpha \colon \mathrm{Lie}\mathcal{G}%
\rightarrow \Omega ^{i,j}(C(P)\mathcal{)}$ is a polynomial of degree $k$,
then $\alpha \in \Omega _{\mathcal{G}}^{i,j+2k}(C(P))$, and the same
definition for forms on $M\times \mathcal{A}$. As $\mathfrak{A}$\ comes from
a connection on $\mathcal{A}\rightarrow \mathcal{A}/\mathcal{G}$ the map $%
\mathrm{C}_{\underline{\mathfrak{A}}}:\Omega _{\mathcal{G}}^{\bullet
}(M\times \mathcal{A})\rightarrow \Omega ^{\bullet }(M\times \mathcal{A}/%
\mathcal{G})$ also preserves this bigraduation. Hence we conclude that $p(%
\underline{\mathfrak{F}})^{i,j}=\mathrm{C}_{\mathfrak{A}}(\mathrm{ev}%
_{1}^{\ast }p_{\mathcal{G},\mathbf{A}}^{i,j})$.

We consider the subset $\mathcal{E}_{\mathcal{F}}=\{j_{x}^{1}A\in J^{1}C(P):%
\mathbf{F}_{j_{x}^{1}A}^{2,0}=0\}\subset J^{1}C(P)$ of $1$-jets of flat
connections. Obviously, a connection $A\in \Gamma (M,C(P))$ is flat if and
only if $j_{x}^{1}A\in \mathcal{E}_{\mathcal{F}}$ for every $x\in M$. On $%
\mathcal{E}_{\mathcal{F}}$ we have
\begin{align*}
p_{\mathcal{G},\mathbf{A}}|_{\mathcal{E}_{\mathcal{F}}}(X)& =p(\mathbf{F}%
^{1,1}-\pi ^{\ast }X,\ldots ,\mathbf{F}^{1,1}-\pi ^{\ast }X) \\
& =\sum\limits_{i=0}^{r}(-1)^{i}\tbinom{r}{i}p(\pi ^{\ast }X,\overset{(i}{%
\ldots },\pi ^{\ast }X,\mathbf{F}^{1,1},\overset{(r-i}{\ldots },\mathbf{F}%
^{1,1})\in \bigoplus\limits_{i=0}^{r}\Omega ^{r-i,r+i}(C(P))
\end{align*}%
as $p(\pi ^{\ast }X,\overset{(i}{\ldots },\pi ^{\ast }X,\mathbf{F}^{1,1},%
\overset{(r-i}{\ldots },\mathbf{F}^{1,1})\in \Omega ^{r-i,r+i}(C(P))$. In
particular, $p_{\mathcal{G},\mathbf{A}}(X)^{2r-j,j}=0$, for $j<r$, and
Proposition \ref{porpnul} follows.

\section{Cohomology classes on the moduli space of irreducible flat
connections}

If we denote by $\widetilde{\mathcal{A}}$ the space of irreducible
connections, then $\widetilde{\mathcal{A}}/\mathrm{Gau}P$ is a well defined
manifold and we can try to apply the preceding construction to this case.
The unique obstacle to do it is that the action of $\mathrm{Gau}P$ on $%
\widetilde{\mathcal{A}}$ is not free. If we denote by $Z(G)$ the center of $G
$, then the isotropy of any $A\in \widetilde{\mathcal{A}}$ is $Z(G)$
(considered as constant gauge transformations). Hence, if we define the
group $\widetilde{\mathcal{G}}=\mathrm{Gau}P/Z(G)$ then $\widetilde{\mathcal{%
G}}$ acts freely on $\widetilde{\mathcal{A}}$ and $\widetilde{\mathcal{A}}%
\rightarrow \widetilde{\mathcal{A}}/\widetilde{\mathcal{G}}$ is a principal $%
\mathcal{\tilde{G}}$-bundle (see \cite{Donaldson}). Note that $\widetilde{%
\mathcal{A}}/\widetilde{\mathcal{G}}=\widetilde{\mathcal{A}}/\mathrm{Gau}P$
an hence this quotient is the moduli space of irreducible connections. The
problem now is that $\widetilde{\mathcal{G}}$ does not act on $P\times
\widetilde{\mathcal{A}}$ because $Z(G)$ does not act trivially on $P$. To
solve this problem we define $\widetilde{\mathbb{P}}=(P\times \widetilde{%
\mathcal{A}})/Z(G)=\mathbb{(}P/Z(G))\times \widetilde{\mathcal{A}}$ which is
a principal $\widetilde{G}$-bundle, where $\tilde{G}=G/Z(G)$.

Let $\mathfrak{z}$ be the lie algebra of $Z(G)$ and $\widetilde{\mathfrak{g}}%
=\mathfrak{g}/\mathfrak{z}$ the Lie algebra of $\widetilde{G}$. We denote by
$t\colon\mathfrak{g}\rightarrow\widetilde{\mathfrak{g}}$ the projection. The
connection $\mathbb{A}\in\Omega^{1}(P\times\mathcal{A},\mathfrak{g})$
induces a connection $\widetilde{\mathbb{A}}$ on $\widetilde{\mathbb{P}}$
which is invariant under the action of $\widetilde{\mathcal{G}}$ in the
following way.

\begin{lemma}
The form $t\circ\mathbb{A}\in\Omega^{1}(P\times\mathcal{A},\widetilde{%
\mathfrak{g}})$ projects to a connection form $\widetilde{\mathbb{A}}%
\in\Omega^{1}(\widetilde{\mathbb{P}},\widetilde{\mathfrak{g}})$ which is
invariant under the action of $\widetilde{\mathcal{G}}$.
\end{lemma}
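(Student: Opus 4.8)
The plan is to show two things: first, that the $\widetilde{\mathfrak{g}}$-valued one-form $t\circ\mathbb{A}$ descends along the quotient map $P\times\mathcal{A}\to\widetilde{\mathbb{P}}=(P\times\mathcal{A})/Z(G)$ to a well-defined form $\widetilde{\mathbb{A}}$ on $\widetilde{\mathbb{P}}$; and second, that this descended form is a genuine connection form for the $\widetilde{G}$-bundle $\widetilde{\mathbb{P}}\to M\times\mathcal{A}$ and is invariant under $\widetilde{\mathcal{G}}$. For the descent, I must verify that $t\circ\mathbb{A}$ is basic with respect to the principal $Z(G)$-action that defines $\widetilde{\mathbb{P}}$, i.e. that it is both $Z(G)$-invariant and horizontal (its contraction with every fundamental vector field of the $Z(G)$-action vanishes). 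Horizontality is the crucial point: if $\zeta\in\mathfrak{z}$ and $\zeta_{P\times\mathcal{A}}$ denotes the fundamental vector field of the $Z(G)$-action, then $\mathbb{A}(\zeta_{P\times\mathcal{A}})=\zeta$ since $Z(G)\subset G$ acts on the $P$-factor as part of the structure group and $\mathbb{A}$ is the tautological connection form; applying $t$ gives $t(\zeta)=0$ because $\zeta\in\mathfrak{z}=\ker t$. Thus $t\circ\mathbb{A}$ annihilates the vertical directions of the $Z(G)$-quotient. Invariance under $Z(G)$ follows from the equivariance $(R_{z})^{\ast}\mathbb{A}=\mathrm{Ad}_{z^{-1}}\mathbb{A}$ together with the fact that $Z(G)$ is central, so $\mathrm{Ad}_{z^{-1}}$ descends to the identity on $\widetilde{\mathfrak{g}}=\mathfrak{g}/\mathfrak{z}$; concretely $t\circ\mathrm{Ad}_{z^{-1}}=\mathrm{Ad}_{t(z)^{-1}}\circ t=t$. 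Hence $t\circ\mathbb{A}$ is $Z(G)$-basic and projects to a unique $\widetilde{\mathbb{A}}\in\Omega^{1}(\widetilde{\mathbb{P}},\widetilde{\mathfrak{g}})$.

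Next I would check that $\widetilde{\mathbb{A}}$ is a connection form for the residual $\widetilde{G}=G/Z(G)$ action on $\widetilde{\mathbb{P}}\to M\times\mathcal{A}$. There are two conditions. For the fundamental-field condition, given $\bar{\xi}\in\widetilde{\mathfrak{g}}$ with a lift $\xi\in\mathfrak{g}$, the fundamental vector field $\bar{\xi}_{\widetilde{\mathbb{P}}}$ is the projection of $\xi_{P\times\mathcal{A}}$, so $\widetilde{\mathbb{A}}(\bar{\xi}_{\widetilde{\mathbb{P}}})=t(\mathbb{A}(\xi_{P\times\mathcal{A}}))=t(\xi)=\bar{\xi}$, which is well defined independently of the lift because changing $\xi$ by an element of $\mathfrak{z}$ changes $t(\xi)$ by zero. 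For $\widetilde{G}$-equivariance, I use that $t$ intertwines the adjoint actions as above together with the equivariance of $\mathbb{A}$, obtaining $(R_{\bar{g}})^{\ast}\widetilde{\mathbb{A}}=\mathrm{Ad}_{\bar{g}^{-1}}\widetilde{\mathbb{A}}$ by passing the identity for $\mathbb{A}$ through the projection $t$.

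Finally, for invariance under $\widetilde{\mathcal{G}}=\mathrm{Gau}P/Z(G)$, I would start from the invariance of $\mathbb{A}$ under all of $\mathrm{Aut}P$ (recorded earlier in the paper), which in particular gives $\phi^{\ast}\mathbb{A}=\mathbb{A}$ for every $\phi\in\mathrm{Gau}P$. Since the $\mathrm{Gau}P$-action commutes with the $Z(G)$-action used to form $\widetilde{\mathbb{P}}$, and $\widetilde{\mathcal{G}}$ is precisely the quotient acting on $\widetilde{\mathbb{P}}$, the invariance of $t\circ\mathbb{A}$ descends: for $\bar{\phi}\in\widetilde{\mathcal{G}}$ with lift $\phi\in\mathrm{Gau}P$ we get $\bar{\phi}^{\ast}\widetilde{\mathbb{A}}=\widetilde{t\circ\phi^{\ast}\mathbb{A}}=\widetilde{t\circ\mathbb{A}}=\widetilde{\mathbb{A}}$. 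The main obstacle I anticipate is bookkeeping the interplay of the three commuting (or centrally-intertwined) group actions — $Z(G)$, $\widetilde{G}$, and $\widetilde{\mathcal{G}}$ — and ensuring at each step that a quantity defined via an arbitrary lift is actually independent of that lift; once the key identity $t\circ\mathrm{Ad}_{g}=\mathrm{Ad}_{t(g)}\circ t$ and the kernel condition $t|_{\mathfrak{z}}=0$ are in hand, the remaining verifications are routine diagram-chasing of the descent.
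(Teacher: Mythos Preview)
Your proposal is correct and follows essentially the same approach as the paper: you verify that $t\circ\mathbb{A}$ is $Z(G)$-basic (horizontal via $t|_{\mathfrak{z}}=0$, invariant via centrality), then check the two connection axioms for $\widetilde{\mathbb{A}}$ using the intertwining relation $t\circ\mathrm{Ad}_{g}=\mathrm{Ad}_{t(g)}\circ t$, and finally deduce $\widetilde{\mathcal{G}}$-invariance from the $\mathrm{Gau}P$-invariance of $\mathbb{A}$ and the commutativity of the actions. The paper's proof is the same argument, only more tersely written.
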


\begin{proof}
On one hand, given $B\in\mathfrak{z}$, we have $t\circ\mathbb{A}(B_{\mathbb{P%
}})=t(B)=0$. On the other, for $g\in Z(G)$, $R_{g}^{\ast}(t\circ\mathbb{A}%
)=t\circ R_{g}^{\ast}\mathbb{A}=t\circ\mathrm{Ad}_{g^{-1}}\mathbb{A}=t\circ%
\mathbb{A}$. Hence $t\circ\mathbb{A}=\pi_{Z}^{\ast}\tilde{\mathbb{A}}$,
where $\pi_{Z}:\mathbb{P}\rightarrow\tilde{\mathbb{P}}$. As the fundamental
vector fields $B_{\mathbb{P}}$, $B\in\mathfrak{g}$, project to $(t(B))_{%
\tilde{\mathbb{P}}}$, we have that $\tilde{\mathbb{A}}(D_{\tilde{\mathbb{P}}%
})=D$, for any $D\in\widetilde{\mathfrak{g}}$. Moreover, given $(g)_{Z}\in%
\tilde{G}$, since $t\circ\mathrm{Ad}_{g}=\mathrm{Ad}_{(g)_{Z}}\circ t$, we
have that $R_{(g)_{Z}}^{\ast}\tilde{\mathbb{A}}=\mathrm{Ad}_{(g)_{Z}^{-1}}%
\tilde{\mathbb{A}}$, and $\tilde{\mathbb{A}}$ is a connection form.

Finally, since the actions of $G$ and $\mathcal{G}$ commute on $P\times%
\mathcal{A}$, the invariance of $\mathbb{A}$ with respect to $\mathcal{G}$
induces the invariance of $\tilde{\mathbb{A}}$ with respect to $\tilde{%
\mathcal{G}}$.
\end{proof}

Now we can apply all the preceding results, but we should take $p\in
\mathcal{I}_{\mathbb{Z}}^{r}(\tilde{G})$ and $\mu\in H^{2r}(B\widetilde {G},%
\mathbb{Z})$ in place of the corresponding objects for $G$.

As $\widetilde{\mathbb{A}}$ is $\widetilde{\mathcal{G}}$-invariant, if $%
\mathfrak{A}$ is a connection on $\widetilde{\mathcal{A}}\rightarrow
\widetilde{\mathcal{A}}/\widetilde{\mathcal{G}}$, by Proposition \ref%
{quotcha} they determine a connection on the quotient bundle $\widetilde{%
\mathbb{P}}/\widetilde{\mathcal{G}}\rightarrow M\times \widetilde{\mathcal{A}%
}/\widetilde{\mathcal{G}}$, and we have the Chern-Simons characters $%
\widetilde{\chi }_{\mathfrak{A}}\colon Z_{2r-1}(M\times \widetilde{\mathcal{A%
}}/\widetilde{\mathcal{G}})\rightarrow \mathbb{R}/\mathbb{Z}.$ Doing the
same as in Section \ref{connections}\ we obtain maps $\widetilde{\chi }_{%
\mathfrak{A}}^{k}\colon Z_{2r-k-1}(M)\times Z_{k}(\widetilde{\mathcal{A}}/%
\widetilde{\mathcal{G}})\rightarrow \mathbb{R}/\mathbb{Z}$.

If we restrict to $\widetilde{\mathcal{F}}=\{A\in\mathcal{A}:F_{A}=0$, $A$
is irreducible$\}\subset\widetilde{\mathcal{A}}$ the space of irreducible
flat connections, using the the same arguments as in Theorem \ref{th}, we
get cohomology maps $\widetilde{\chi}^{k}\colon H_{2r-k-1}(M)\times H_{k}(%
\widetilde{\mathcal{F}}/\widetilde{\mathcal{G}})\rightarrow\mathbb{R}/%
\mathbb{Z}$ that does not depend on the connection $\mathfrak{A}$

\bigskip

We recall that if $G=U(n)$ or $SU(n)$, then $\widetilde{G}=PU(n)=SU(n)/%
\mathbb{Z}_{n}$ is the projective unitary group. We have $\mathfrak{su}(n)=%
\mathfrak{pu}(n)$ and $\mathcal{I}^{r}(PU(n))=\mathcal{I}^{r}(SU(n))$ as
they are connected groups. However, we have $\mathcal{I}_{\mathbb{Z}%
}^{r}(PSU(n))\varsubsetneq \mathcal{I}_{\mathbb{Z}}^{r}(SU(n))$. The
projection $i\colon SU(n)\rightarrow PU(n)$ induces a map $Bi^{\ast }\colon
H^{\bullet }(\mathbf{B}PU(n),\mathbb{Z})\rightarrow $ $H^{\bullet }(\mathbf{B%
}SU(n),\mathbb{Z})$. It is shown in \cite{AW} that $H^{4}(\mathbf{B}PU(n),%
\mathbb{Z})\simeq \mathbb{Z}$ has a generator $e_{2}$ such that $Bi^{\ast
}(e_{2})=2nc_{2}$ for $n$ even and $B\rho ^{\ast }e_{2}=nc_{2}$ for $n$ odd,
where $c_{2}$ denotes the second Chern class. Hence if $C_{2}\in \mathcal{I}%
_{\mathbb{Z}}^{2}(SU(n))$ is the second Chern polynomial we have $%
C_{2}\notin \mathcal{I}_{\mathbb{Z}}^{2}(PU(n))$ but $2nC_{2}\in \mathcal{I}%
_{\mathbb{Z}}^{2}(PU(n))$. We conclude that for a $SU(n)$-principal bundle $%
(C_{2},c_{2})$ does not determine cohomology classes on $\widetilde{\mathcal{%
F}}/\widetilde{\mathcal{G}}$, but $(2nC_{2},e_{2})$ does.

\section{Low dimensional cases}

If $c\in Z_{2r-k-1}$, we define $\xi _{c,\underline{\mathfrak{A}}}\in \hat{H}%
^{k+1}(\mathcal{A}/\mathcal{G})$ by $\xi _{c,\underline{\mathfrak{A}}%
}(s)=\chi _{\underline{\mathfrak{A}}}^{k}(c\times s)$. In this Section we
study the geometrical interpretation of the differential characters $\xi _{c,%
\underline{\mathfrak{A}}}$ for $k=0$ and $k=1$.

\subsection{First order characters and Dijkgraaf-Witten action for
Chern-Simons theory\label{1orderIntCS}}

We recall the definition of the Dijkgraaf-Witten action for Chern-Simons
theory. Usually classical Chern-Simons theory is defined for trivial bundles
over 3-manifolds using global sections, but this procedure can not be
generalized to non-trivial bundles. In \cite{DW} Dijkgraaf and Witten shown
how the Chern-Simons action can be defined in this case using differential
characters.

Let $A$ be a connection on $P\rightarrow M$, $p\in \mathcal{I}_{\mathbb{Z}%
}^{r}(G)$, $\Upsilon $ a compatible characteristic class and let $\chi
_{A}\in \hat{H}^{2r}(M)$ be the Chern-Simons character of $A$ associated to $%
p$ and $\Upsilon $. If $c\colon C\rightarrow M$ is a smooth map with $C$
closed and $\dim C=2r-1$ we define the Chern-Simons action $\lambda
_{c}\colon \mathcal{A}\rightarrow \mathbb{R}/\mathbb{Z}$ by $\lambda
_{c}(A)=\chi _{A}(c)$. By\ definition, if $c=\partial u$ we have $\lambda
_{\partial u}(A)=\chi _{A}(\partial u)=\int_{u}p(F,\ldots ,F)$. If $A_{0}$
is another connection on $P$ we have $\lambda _{c}(A)-\lambda
_{c}(A_{0})=\chi _{A}(c)-\chi _{A_{0}}(c)=\int_{c}Tp(A,A_{0})$ and hence $%
\lambda _{c}(A)=\lambda _{c}(A_{0})+\int_{c}Tp(A,A_{0})$. Note that $\lambda
_{c}$ gives a definition of the Chern-Simons action which is independent of
the background connection $A_{0}$ chosen, i.e., the characteristic class $%
\Upsilon $ fixes the value of $\lambda _{c}(A_{0})$. As remarked in \cite{DW}
this constant is relevant in the quantization of the theory. It can be seen
that $\lambda _{c}$ is $\mathcal{G}$-invariant.

Now we show how first order characters are related to the Chern-Simons
Lagrangian $\lambda _{c}$.

A differential character of order 1 is a homomorphism $\chi \colon
Z_{0}(M)\rightarrow \mathbb{R}/\mathbb{Z}$ such that there exist $\lambda
\in \Omega ^{1}(M)$ with $\chi (\{x\}-\{y\})=\int_{\gamma }\lambda $ for any
$\gamma \subset M$, $\partial \gamma =x-y$. We define $\varphi _{\chi
}\colon M\rightarrow \mathbb{R}/\mathbb{Z}$ by $\varphi _{\chi }(x)=\chi
(\{x\})$. It can be seen that $\varphi $ is differentiable (see \cite{BB}).
Conversely, given any map $\varphi \colon M\rightarrow \mathbb{R}/\mathbb{Z}$%
, we can define $\chi _{\varphi }\colon Z_{0}(M)\rightarrow \mathbb{R}/%
\mathbb{Z}$ by setting $\chi _{\varphi }(\{x\})=\varphi (x)$, and $\lambda
=\varphi ^{\ast }(dt)$ where $t$ is a coordinate on $\mathbb{R}$. Hence, a
differential character of order 1 is simply a map $\varphi \colon
M\rightarrow \mathbb{R}/\mathbb{Z}$.

\begin{lemma}
The character $\xi _{c,\underline{\mathfrak{A}}}\in \hat{H}^{1}(\mathcal{A}/%
\mathcal{G})$ does not depend on $\mathfrak{A}$.
\end{lemma}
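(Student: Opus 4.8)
The plan is to reduce the statement to the transgression identity \eqref{CambioC} specialized to $k=0$ and then show that the resulting correction term vanishes identically on all of $\mathcal{A}/\mathcal{G}$ for purely bidegree reasons, with no recourse to flatness. A first-order character is determined by its values on $0$-cycles, so $\xi_{c,\underline{\mathfrak{A}}}$ is completely described by the assignment $[A]\mapsto\chi_{\underline{\mathfrak{A}}}^{0}(c,\{[A]\})=\chi_{\underline{\mathfrak{A}}}(c\times\{[A]\})$, where $c\in Z_{2r-1}(M)$ (since $\xi_{c,\underline{\mathfrak{A}}}\in\hat{H}^{k+1}=\hat{H}^{1}$ forces $k=0$). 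It therefore suffices to compare $\xi_{c,\underline{\mathfrak{A}}}$ and $\xi_{c,\underline{\mathfrak{A}}'}$ for two connections $\mathfrak{A},\mathfrak{A}'$ on $\mathcal{A}\rightarrow\mathcal{A}/\mathcal{G}$.

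First I would apply \eqref{CambioC} with $k=0$ and $s=\{[A]\}$, obtaining
\begin{equation*}
\xi_{c,\underline{\mathfrak{A}}'}([A])-\xi_{c,\underline{\mathfrak{A}}}([A])=\int_{c\times\{[A]\}}Tp(\underline{\mathfrak{A}}',\underline{\mathfrak{A}})^{2r-1,0}.
\end{equation*}
The crucial observation is that this $(2r-1,0)$-component vanishes identically. Exactly as in the proof of Theorem \ref{th}, because both $\mathfrak{A}$ and $\mathfrak{A}'$ descend from connections on $\mathcal{A}\rightarrow\mathcal{A}/\mathcal{G}$, their difference satisfies $\underline{\mathfrak{A}}'-\underline{\mathfrak{A}}\in\Omega^{0,1}(M\times\mathcal{A}/\mathcal{G},\mathrm{ad}(P\times\mathcal{A}/\mathcal{G}))$. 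In the transgression formula \eqref{trangression2} this difference occupies the first slot of $p$, so every term of $Tp(\underline{\mathfrak{A}}',\underline{\mathfrak{A}})$ carries at least one leg in the $\mathcal{A}/\mathcal{G}$-direction; hence its $\mathcal{A}/\mathcal{G}$-degree is at least $1$ and its $(2r-1,0)$-part is zero. The integral on the right therefore vanishes and $\xi_{c,\underline{\mathfrak{A}}'}=\xi_{c,\underline{\mathfrak{A}}}$.

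I would then stress what makes $k=0$ special and why the lemma can be asserted over all of $\mathcal{A}/\mathcal{G}$ rather than only over the moduli of flat connections. In Theorem \ref{th} the vanishing of the $(2r-1-k,k)$-component of the transgression for general $k<r$ needed the flatness hypothesis through Lemma \ref{Lema1} (namely $F_{t}^{2,0}=0$ on $M\times\mathcal{F}/\mathcal{G}$). For $k=0$ the target bidegree $(2r-1,0)$ has $\mathcal{A}/\mathcal{G}$-degree $0$, which lies strictly below the minimal $\mathcal{A}/\mathcal{G}$-degree $1$ already forced by the leading $\Omega^{0,1}$ factor; the vanishing is thus automatic and requires no restriction on the connection $A$. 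This is precisely what allows the statement to be made for $\hat{H}^{1}(\mathcal{A}/\mathcal{G})$.

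There is no serious obstacle; the only point demanding care is the bookkeeping of bidegrees, i.e. confirming that the leading argument $\underline{\mathfrak{A}}'-\underline{\mathfrak{A}}$ genuinely lies in $\Omega^{0,1}$, which is inherited from the fact that $\mathfrak{A},\mathfrak{A}'$ come from $\mathcal{A}\rightarrow\mathcal{A}/\mathcal{G}$. As an alternative and as a sanity check anticipating the identification with the Dijkgraaf--Witten action that follows, one can argue by naturality: the $G$-bundle morphism $P\rightarrow(P\times\mathcal{A})/\mathcal{G}$, $p\mapsto[(p,A)]$, covers the inclusion $x\mapsto(x,[A])$ and pulls $\underline{\mathfrak{A}}$ back to $A$ (since $\mathbb{A}$ restricts to $A$ on the slice $M\times\{A\}$, while $\eta=-\mathbb{A}(\mathfrak{A})\in\Omega^{0,1}$ restricts to zero there). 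Functoriality of the Chern-Simons character then yields $\xi_{c,\underline{\mathfrak{A}}}([A])=\chi_{A}(c)=\lambda_{c}(A)$, which is manifestly independent of $\mathfrak{A}$.
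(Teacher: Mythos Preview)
Your proof is correct and follows exactly the paper's argument: apply \eqref{CambioC} at $k=0$ and observe that $Tp(\underline{\mathfrak{A}}',\underline{\mathfrak{A}})^{2r-1,0}=0$ because $\underline{\mathfrak{A}}'-\underline{\mathfrak{A}}\in\Omega^{0,1}$. Your additional paragraph explaining why no flatness restriction is needed here, and your alternative naturality argument via $g_A\colon P\to(P\times\mathcal{A})/\mathcal{G}$, are both sound and in fact anticipate the identification $\xi_{c}([A])=\lambda_c(A)$ that the paper establishes immediately after the lemma.
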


\begin{proof}
By equation (\ref{CambioC}) we have $\xi _{c,\underline{\mathfrak{A}}%
}([A])=\xi _{c,\underline{\mathfrak{A}}}(s)+\int_{c\times \lbrack A]}Tp(%
\underline{\mathfrak{A}}^{\prime },\underline{\mathfrak{A}})^{2r-1,0}$. But $%
Tp(\underline{\mathfrak{A}}^{\prime },\underline{\mathfrak{A}})^{2r-1,0}=0$
because $\underline{\mathfrak{A}}^{\prime }-\underline{\mathfrak{A}}\in
\Omega ^{0,1}(M\times \mathcal{A}/\mathcal{G},\mathrm{ad}(P\times \mathcal{A}%
/\mathcal{G)})$.
\end{proof}

As $\xi _{c,\underline{\mathfrak{A}}}\in \hat{H}^{1}(\mathcal{A}/\mathcal{G})
$ does not depend on $\mathfrak{A}$, we denote $\xi _{c,\underline{\mathfrak{%
A}}}$ simply by $\xi _{c}$. The first order character $\xi _{c}$ determines
a function $\varphi _{c}\colon \mathcal{A}/\mathcal{G}\rightarrow \mathbb{R}/%
\mathbb{Z}$ given by $\varphi _{c}([A])=\xi _{c}([A])=\chi _{\underline{%
\mathfrak{A}}}(c\times \lbrack A])$. The connection $A$ induces a morphism
of principal $G$-bundles $g_{A}\colon P\rightarrow (P\times \mathcal{A})/%
\mathcal{G}$, defined by $g_{A}(y)=[(y,A)]$ for $y\in P$. By Proposition \ref%
{quotcha}\ we have $g_{A}^{\ast }(\mathfrak{A})=A$, and hence $\underline{g}%
_{A}^{\ast }\chi _{\underline{\mathfrak{A}}}=\chi _{A}$, where $\underline{g}%
_{A}\colon M\rightarrow M\times (\mathcal{A}/\mathcal{G})$ is given by $%
\underline{g}_{A}(x)=(x,[A])$. We conclude that $\lambda _{c}(A)=\chi
_{A}(c)=\underline{g}_{A}^{\ast }\chi _{\underline{\mathfrak{A}}}(c)=\chi _{%
\underline{\mathfrak{A}}}(c\times \lbrack A])=\varphi _{c}([A])$. In
particular this proves that $\lambda _{c}$ is $\mathcal{G}$-invariant.

We study the restriction to the moduli space of flat connections. Equations (%
\ref{mainobject}) and (\ref{mainobject2}) imply that if $A$ is flat then for
$r\geq 1$ the Chern-Simons action $\lambda _{c}(A)$ only depends on the
homology class of $c$. Also, for $r\geq 2$, $\lambda _{c}(A)$ only depends
on the connected component of $[A]$ on $\mathcal{F}/\mathcal{G}$ (i.e, $%
\lambda _{c}(A)$ determines a locally constant function on $\mathcal{F}/%
\mathcal{G}$). For example, when $\dim M=3$ and $c=M$ this fact is well
known, as the extremals of the Chern-Simons action are the flat connections,
and hence the Chern-Simons action is constant on each connected component of
$\mathcal{F}$.

\subsection{Differential characters of order 2\label{2orden}}

First we recall that if $\mathcal{U}\rightarrow M$ a principal $U(1)$-bundle
with connection $\Theta $, and curvature $\omega \in \Omega ^{2}(M)$, the
log-holonomy of $\Theta $ $\log \mathrm{hol}_{\Theta }\colon
Z_{1}(M)\rightarrow \mathbb{R}/\mathbb{Z}$ is a differential character of
order 2 with curvature $\omega $. A classical result in differential
cohomology asserts that the converse is true, that is,\ every second order\
differential character can be represented\ as the holonomy of a connection $%
\Theta $ on a principal $U(1)$ bundle $\mathcal{U}\rightarrow M$ and this
bundle and connection are unique modulo isomorphisms. In our case is
possible to give a concrete bundle and connection using the following result
proved in \cite{CSconnections}

\begin{theorem}
\label{Prop}Let $N$ be a connected and simply connected manifold in which $%
\mathcal{G}$ acts in such a way that $\pi \colon N\rightarrow N/\mathcal{G}$
is a principal $\mathcal{G}$-bundle. Let $\chi \in \hat{H}^{2}(N/\mathcal{G}%
) $ be a second order differential character on $N/\mathcal{G}$ with
curvature $\omega $, and assume that there exist\ $\lambda \in \Omega ^{1}(N)
$ such that $\pi ^{\ast }\omega =d\lambda $. Then there exists a unique lift
of the action of $\mathcal{G}$ to $N\times U(1)$ by $U(1)$-bundle
automorphism such that $\Theta =\vartheta -2\pi i\lambda \in \Omega
^{1}(N\times U(1),i\mathbb{R})$ is projectable onto a connection $\underline{%
\Theta }$ on $\mathcal{U}=(N\times U(1))/\mathcal{G}\rightarrow N/\mathcal{G}
$ and $\chi =\log \mathrm{hol}_{\underline{\Theta }}$ ($\vartheta $ denotes
the Maurer-Cartan form on $U(1)$).

The action of $\phi \in \mathcal{G}$ on $N\times U(1)\mathbb{\ }$is given by
$\Phi _{\phi }\colon N\times U(1)\rightarrow N\times U(1),$ $\Phi _{\phi
}(x,u)=(\phi x,\exp (2\mathrm{\pi }i\alpha _{\phi }(x))\cdot u)$, where $%
\alpha _{\phi }\colon N\rightarrow \mathbb{R}/\mathbb{Z}$ is defined by $%
\alpha _{\phi }(x)=\int\nolimits_{\gamma }\lambda -\chi (\pi \circ \gamma )$%
, and $\gamma $ is any curve on $N$ joining $x$ and $\phi x$.
\end{theorem}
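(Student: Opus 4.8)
The plan is to realize \(\chi\) as a holonomy by starting on the trivial bundle \(N\times U(1)\to N\) equipped with \(\Theta=\vartheta-2\pi i\lambda\), whose curvature is \(d\Theta=-2\pi i\,d\lambda=-2\pi i\,\pi^{\ast}\omega\), and then descending everything to \(N/\mathcal{G}\). The first task is to check that the proposed \(\alpha_{\phi}\colon N\to\mathbb{R}/\mathbb{Z}\), \(\alpha_{\phi}(x)=\int_{\gamma}\lambda-\chi(\pi\circ\gamma)\), is independent of the curve \(\gamma\) joining \(x\) and \(\phi x\). Note first that \(\pi\circ\gamma\) is a genuine loop, since \(\phi\in\mathcal{G}\) gives \(\pi(\phi x)=\pi(x)\), so \(\chi(\pi\circ\gamma)\) is meaningful. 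If \(\gamma'\) is a second such curve then \(\gamma-\gamma'\) is a \(1\)-cycle in \(N\); since \(N\) is simply connected it bounds a \(2\)-chain \(\Sigma\), and
\[
\Big(\int_{\gamma}\lambda-\chi(\pi\circ\gamma)\Big)-\Big(\int_{\gamma'}\lambda-\chi(\pi\circ\gamma')\Big)=\int_{\Sigma}d\lambda-\chi\big(\partial(\pi\circ\Sigma)\big)=\int_{\Sigma}\pi^{\ast}\omega-\int_{\pi\circ\Sigma}\omega=0,
\]
where I use \(d\lambda=\pi^{\ast}\omega\) and the defining property \(\chi(\partial u)=\int_{u}\omega\) of the character.

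Next I would verify that \(\Phi_{\phi}(x,u)=(\phi x,\exp(2\pi i\alpha_{\phi}(x))u)\) is a left action by \(U(1)\)-bundle automorphisms and that \(\Theta\) is invariant. Splitting any curve from \(x\) to \(\psi\phi x\) as one from \(x\) to \(\phi x\) followed by one from \(\phi x\) to \(\psi\phi x\), and using additivity of \(\chi\) on cycles, yields the cocycle identity \(\alpha_{\psi\phi}(x)=\alpha_{\psi}(\phi x)+\alpha_{\phi}(x)\), which is exactly \(\Phi_{\psi}\circ\Phi_{\phi}=\Phi_{\psi\phi}\). A direct pullback gives \(\Phi_{\phi}^{\ast}\Theta=\vartheta+2\pi i\,d\alpha_{\phi}-2\pi i\,\phi^{\ast}\lambda\), so \(\Phi_{\phi}^{\ast}\Theta=\Theta\) is equivalent to the key identity
\[
d\alpha_{\phi}=\phi^{\ast}\lambda-\lambda .
\]
Granting this, \(\Theta\) (a connection on \(N\times U(1)\to N\), since \(\Theta|_{\mathrm{fibre}}=\vartheta\)) is \(\mathcal{G}\)-invariant and descends to a connection \(\underline{\Theta}\) on \(\mathcal{U}=(N\times U(1))/\mathcal{G}\to N/\mathcal{G}\), whose curvature is \(\omega\) in the normalization for which \(\log\mathrm{hol}\) is a character.

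I expect the identity \(d\alpha_{\phi}=\phi^{\ast}\lambda-\lambda\) to be the main obstacle, as it mixes an honest \(1\)-form with the non-smooth object \(\chi\). I would prove it by a first-variation argument: fix \(v\in T_{x}N\), a path \(x(t)\) tangent to \(v\), and connecting curves \(\gamma_{t}\) from \(x(t)\) to \(\phi x(t)\) with variation field \(V\) (so \(V=v\) at the start and \(V=\phi_{\ast}v\) at the end). Moving-endpoint Stokes gives \(\tfrac{d}{dt}\big|_{0}\int_{\gamma_{t}}\lambda=\lambda(\phi_{\ast}v)-\lambda(v)-\int_{\gamma_{0}}i_{V}\pi^{\ast}\omega\). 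For the character term, the cylinder \(\pi\circ\gamma_{t}\) has both side edges equal to \(t\mapsto\pi(x(t))\) (again because \(\pi(\phi x)=\pi(x)\)), so they cancel and \(\pi\circ\gamma_{t}-\pi\circ\gamma_{0}=\partial(\text{cylinder})\); the curvature property then gives \(\tfrac{d}{dt}\big|_{0}\chi(\pi\circ\gamma_{t})=-\int_{\gamma_{0}}i_{V}\pi^{\ast}\omega\). The two interior integrals cancel, leaving \(d\alpha_{\phi}(v)=\lambda(\phi_{\ast}v)-\lambda(v)=(\phi^{\ast}\lambda-\lambda)(v)\), as required.

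Finally I would compute the holonomy and settle uniqueness. Given a loop \(\bar{\ell}\) in \(N/\mathcal{G}\), lift it to a path \(\ell\) in \(N\) with \(\ell(1)=\phi\,\ell(0)\); the horizontal equation \(\Theta(\dot{\ })=0\) reads \(\dot{\theta}=\lambda(\dot{\ell})\), so horizontal transport multiplies the \(U(1)\)-coordinate by \(\exp(2\pi i\int_{\ell}\lambda)\). Using \(\Phi_{\phi}\) to identify the endpoint over \(\phi\ell(0)\) with a point over \(\pi(\ell(0))\) introduces the factor \(\exp(-2\pi i\alpha_{\phi}(\ell(0)))\), whence \(\log\mathrm{hol}_{\underline{\Theta}}(\bar{\ell})=\int_{\ell}\lambda-\alpha_{\phi}(\ell(0))=\chi(\pi\circ\ell)=\chi(\bar{\ell})\) by the very definition of \(\alpha_{\phi}\). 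For uniqueness, any lift by bundle automorphisms has the form \((\phi x,\exp(2\pi i\beta_{\phi}(x))u)\); invariance of \(\Theta\) forces \(d\beta_{\phi}=\phi^{\ast}\lambda-\lambda\), fixing each \(\beta_{\phi}\) up to an additive constant, and the requirement \(\log\mathrm{hol}_{\underline{\Theta}}=\chi\) together with the holonomy computation pins those constants to \(\beta_{\phi}=\alpha_{\phi}\), giving the claimed uniqueness.
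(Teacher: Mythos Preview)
The paper does not actually prove this theorem: it is quoted verbatim as ``the following result proved in \cite{CSconnections}'' and no argument is given in the present article. So there is no in-paper proof to compare your proposal against.

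That said, your reconstruction is essentially correct and follows the natural line of argument. The well-definedness of $\alpha_{\phi}$, the cocycle identity, the variation computation for $d\alpha_{\phi}=\phi^{\ast}\lambda-\lambda$, the holonomy calculation, and the uniqueness step are all sound. One small point you glossed over: $\mathcal{G}$-invariance of $\Theta$ alone does not suffice for it to descend to the quotient $(N\times U(1))/\mathcal{G}$; you also need $\Theta$ to vanish on vectors tangent to the $\mathcal{G}$-orbits in $N\times U(1)$. Concretely, the fundamental vector field of $X\in\mathrm{Lie}\,\mathcal{G}$ at $(x,u)$ is $(X_{N}(x),\,2\pi i\,\tfrac{d}{dt}\big|_{0}\alpha_{\exp(tX)}(x)\cdot u)$, and choosing the curve $\gamma_{t}(s)=\exp(stX)\cdot x$ (whose projection $\pi\circ\gamma_{t}$ is constant, so the $\chi$-term vanishes) one computes $\tfrac{d}{dt}\big|_{0}\alpha_{\exp(tX)}(x)=\lambda(X_{N}(x))$; hence $\Theta$ annihilates these vectors. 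With this check added, the descent to $\underline{\Theta}$ is justified and your proof is complete.
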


If $c\in Z_{2r-2}(M)$ we have $\xi _{c,\underline{\mathfrak{A}}}\in \hat{H}%
^{2}(\mathcal{A}/\mathcal{G})$. By Section \ref{2equi} $\varpi
_{c}=\int_{c}p_{\mathcal{G},\mathbb{A}}\in \Omega _{\mathcal{G}}^{2}(%
\mathcal{A}/\mathcal{G})$\ can be written $\varpi _{c}=\sigma _{c}+\mu _{c}$%
, and by the definition of $p_{\mathcal{G},\mathbb{A}}$ we have $\sigma
_{c}=\int_{c}p(\mathbb{F})$ and $\mu _{c}(X)=-r\int_{c}p(\mathrm{pr}%
_{1}^{\ast }X,\mathbb{F},\overset{(r-1}{\ldots },\mathbb{F})$. If $A_{0}$ is
a connection on $P\rightarrow M$, we define $\rho _{c}=\int_{c}Tp(\mathbb{A},%
\overline{\mathrm{pr}}_{1}^{\ast }A_{0})\in \Omega ^{1}(\mathcal{A})$, where
$\overline{\mathrm{pr}}_{1}\colon P\times \mathcal{A}\rightarrow P\,$denotes
the\ projection. If we set $\lambda _{c}=\rho _{c}+\mu _{c}(\mathfrak{A})$,
we have the following

\begin{proposition}
We have $\mathrm{curv}(\xi _{c,\underline{\mathfrak{A}}})=\int_{c}p(%
\underline{\mathfrak{F}})$ and $\pi ^{\ast }(\int_{c}p(\underline{\mathfrak{F%
}}))=d\lambda _{c}$.
\end{proposition}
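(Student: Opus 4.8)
The statement collects two facts about $\xi_{c,\underline{\mathfrak{A}}}$, and I would prove them separately.

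\textbf{Curvature.} First I would read off the curvature straight from the definition $\xi_{c,\underline{\mathfrak{A}}}(s)=\chi_{\underline{\mathfrak{A}}}(c\times s)$. Since $c\in Z_{2r-2}(M)$ is a cycle of even dimension, for any $t\in C_{2}(\mathcal{A}/\mathcal{G})$ one has $\partial(c\times t)=c\times\partial t$, so by the defining property of the Chern-Simons character together with (\ref{delta2}), $\xi_{c,\underline{\mathfrak{A}}}(\partial t)=\chi_{\underline{\mathfrak{A}}}(\partial(c\times t))=\int_{c\times t}p(\underline{\mathfrak{F}})$. Fubini for fiber integration over the product rewrites this as $\int_{t}\left(\int_{c}p(\underline{\mathfrak{F}})\right)$, and as $t$ is arbitrary this exhibits $\int_{c}p(\underline{\mathfrak{F}})$ as the curvature of $\xi_{c,\underline{\mathfrak{A}}}$, which is the first assertion.

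\textbf{Pullback.} For the second identity the plan is to convert $\int_{c}p(\underline{\mathfrak{F}})$ into the equivariant $2$-form $\varpi_{c}$ and apply Lemma \ref{2equiLemma}. By (\ref{F2}) we have $p(\underline{\mathfrak{F}})=\mathrm{C}_{\mathfrak{A}}(p_{\mathcal{G},\mathbb{A}})$, and since $\mathrm{C}_{\mathfrak{A}}$ alters only the $\mathcal{A}$-directions (the curvature of $\mathfrak{A}$ and the horizontalization are tangent to $\mathcal{A}$) it commutes with integration over $c\subset M$; hence $\int_{c}p(\underline{\mathfrak{F}})=\mathrm{C}_{\mathfrak{A}}(\varpi_{c})$. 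I would then verify that $\varpi_{c}$ is $D$-closed: the equivariant characteristic form $p_{\mathcal{G},\mathbb{A}}$ is $d_{c}$-closed by equivariant Chern-Weil theory, and fiber integration over the cycle $c$ commutes with $d_{c}=d-i_{X}$ (with $d$ because $\partial c=0$ and $\dim c$ is even, and with $i_{X}$ because the fundamental vector fields of $\mathcal{G}$ are tangent to $\mathcal{A}$), so $d_{c}\varpi_{c}=\int_{c}d_{c}p_{\mathcal{G},\mathbb{A}}=0$. Lemma \ref{2equiLemma} then gives $\pi^{\ast}\mathrm{C}_{\mathfrak{A}}(\varpi_{c})=\sigma_{c}+d(\mu_{c}(\mathfrak{A}))$.

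It remains to identify $\sigma_{c}$ with $d\rho_{c}$. For this I would transgress the two connections $\mathbb{A}$ and $\overline{\mathrm{pr}}_{1}^{\ast}A_{0}$ on $P\times\mathcal{A}\to M\times\mathcal{A}$, using (\ref{trangression2}) in the form $p(\mathbb{F})=\mathrm{pr}_{1}^{\ast}p(F_{A_{0}})+dTp(\mathbb{A},\overline{\mathrm{pr}}_{1}^{\ast}A_{0})$. Integrating over $c$ annihilates the first term on the right for dimensional reasons, since $\mathrm{pr}_{1}^{\ast}p(F_{A_{0}})$ has degree $2r$ and is pulled back from $M$ while $\dim c=2r-2$; and because $c$ is a cycle of even dimension, integration over $c$ commutes with $d$, whence $\sigma_{c}=\int_{c}p(\mathbb{F})=d\int_{c}Tp(\mathbb{A},\overline{\mathrm{pr}}_{1}^{\ast}A_{0})=d\rho_{c}$. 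Combining the pieces yields $\pi^{\ast}(\int_{c}p(\underline{\mathfrak{F}}))=d\rho_{c}+d(\mu_{c}(\mathfrak{A}))=d\lambda_{c}$. The main obstacle is the careful bookkeeping of the interchanges of fiber integration over $c$ with $d$, $d_{c}$ and $\mathrm{C}_{\mathfrak{A}}$ and the attendant $D$-closedness of $\varpi_{c}$; once these commutations are justified (the even dimension of $c$ and the vanishing of its boundary making the Stokes sign harmless) the rest is a direct transgression computation.
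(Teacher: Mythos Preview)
Your proof is correct and follows essentially the same route as the paper's: the curvature computation via $\partial(c\times t)=c\times\partial t$ and Fubini is identical, and for the pullback identity both you and the paper first identify $\int_{c}p(\underline{\mathfrak{F}})$ with $\mathrm{C}_{\mathfrak{A}}(\varpi_{c})$ via (\ref{F2}), then apply Lemma \ref{2equiLemma}, and finally show $\sigma_{c}=d\rho_{c}$ using the transgression between $\mathbb{A}$ and $\overline{\mathrm{pr}}_{1}^{\ast}A_{0}$ together with the vanishing of $\int_{c}\mathrm{pr}_{1}^{\ast}p(F_{A_{0}})$ by bidegree. Your write-up is in fact slightly more complete, since you spell out the $D$-closedness of $\varpi_{c}$ and the commutation of $\int_{c}$ with $d_{c}$ and $\mathrm{C}_{\mathfrak{A}}$, which the paper uses implicitly when invoking Lemma \ref{2equiLemma}.
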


\begin{proof}
For any $s\in Z_{1}(\mathcal{A}/\mathcal{G})$ we have $\xi _{c,\underline{%
\mathfrak{A}}}(\partial s)=\chi _{\underline{\mathfrak{A}}}(c\times \partial
s)=\chi _{\underline{\mathfrak{A}}}(\partial (c\times s))=\int_{c\times s}p(%
\underline{\mathfrak{F}})=\int_{s}\int_{c}p(\underline{\mathfrak{F}})$, and
by the definition of the curvature of a differential character we conclude
that $\mathrm{curv}(\xi _{c,\underline{\mathfrak{A}}})=\int_{c}p(\underline{%
\mathfrak{F}})$.

Furthermore, we have $d\rho _{c}=\int_{c}dTp(\mathbb{A},\overline{\mathrm{pr}%
}_{1}^{\ast }A_{0})=\int_{c}p(\mathbb{F})-\int_{c}\mathrm{pr}_{1}^{\ast
}p(F_{0})=\sigma _{c}$\ as $\int_{c}\mathrm{pr}_{1}^{\ast }p(F_{0})=0$
because $\mathrm{pr}_{1}^{\ast }p(F_{0})\in \Omega ^{2r-1,0}$. By equation %
\ref{F2} we have $\int_{c}p(\underline{\mathfrak{F}})=\mathrm{C}_{\mathfrak{A%
}}(\int_{c}p_{\mathcal{G}})\in \Omega _{\mathcal{G}}^{2}(\mathcal{A}/%
\mathcal{G})$, and the result follows from Lemma \ref{2equiLemma}.
\end{proof}

By applying Proposition \ref{Prop} to the bundle $\pi _{\mathcal{A}}\colon
\mathcal{A}\rightarrow \mathcal{A}/\mathcal{G}$ and the character $\xi _{c,%
\underline{\mathfrak{A}}}$ we obtain the following

\begin{proposition}
\label{PropIntCS} Let $A_{0}$ be a background connection on $P\rightarrow M$
and $\mathfrak{A}$ be a connection on $\mathcal{A}\rightarrow \mathcal{A}/%
\mathcal{G}$. Then there exists a unique lift of the action of $\mathcal{G}$
on $\mathcal{A}$ to an action on $\mathcal{A}\times U(1)$ by $U(1)$-bundle
automorphisms such that $\Theta _{c}=\vartheta -2\pi i(\rho _{c}+\mu _{c}(%
\mathfrak{A}))\in \Omega ^{1}(\mathcal{A}\times U(1),i\mathbb{R})$ is
projectable onto a connection $\underline{\Theta }_{c}$ on $\mathcal{U}_{c}=(%
\mathcal{A}\times U(1))/\mathcal{G}\rightarrow \mathcal{A}/\mathcal{G}$ and $%
\xi _{c,\underline{\mathfrak{A}}}=\log \mathrm{hol}_{\underline{\Theta }_{c}}
$.

The action of $\phi \in \mathcal{G}$ on $\mathcal{A}\times U(1)\mathbb{\ }$%
is given by $\Phi _{\phi }\colon \mathcal{A}\times U(1)\rightarrow \mathcal{A%
}\times U(1),$ $\Phi _{\phi }(A,u)=(\phi A,\exp (2\mathrm{\pi }i\alpha
_{\phi }(A))\cdot u)$, where $\alpha _{\phi }\colon \mathcal{A}\rightarrow
\mathbb{R}/\mathbb{Z}$ is defined by $\alpha _{\phi
}(A)=\int\nolimits_{\gamma }\lambda -\chi (\pi _{\mathcal{A}}\circ \gamma )$%
, and $\gamma $ is any curve on $\mathcal{A}$ joining $A$ and $\phi A$.
\end{proposition}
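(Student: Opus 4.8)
The plan is to obtain this statement as an immediate application of Theorem \ref{Prop}, taking $N=\mathcal{A}$, the affine space of connections. First I would check that the geometric hypotheses on $N$ hold. Since $\mathcal{A}$ is an affine space it is contractible, hence connected and simply connected; and by the standing assumption of this section $\mathcal{G}$ acts freely with $\pi_{\mathcal{A}}\colon\mathcal{A}\to\mathcal{A}/\mathcal{G}$ a principal $\mathcal{G}$-bundle. Thus $N=\mathcal{A}$ meets all the requirements placed on $N$ in Theorem \ref{Prop}.

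Next I would match the remaining data. Because $c\in Z_{2r-2}(M)$, the character $\xi_{c,\underline{\mathfrak{A}}}$ lies in $\hat{H}^{2}(\mathcal{A}/\mathcal{G})$, so it is a second order differential character as required. By the preceding Proposition its curvature is $\omega=\int_{c}p(\underline{\mathfrak{F}})$, and moreover $\pi_{\mathcal{A}}^{\ast}\omega=d\lambda_{c}$ with $\lambda_{c}=\rho_{c}+\mu_{c}(\mathfrak{A})$. These are precisely the two analytic hypotheses of Theorem \ref{Prop}: a second order character $\chi=\xi_{c,\underline{\mathfrak{A}}}$ with curvature $\omega$, together with a primitive $\lambda=\lambda_{c}$ of the pullback $\pi_{\mathcal{A}}^{\ast}\omega$.

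With all the data in place, Theorem \ref{Prop} applies verbatim. It produces a unique lift of the $\mathcal{G}$-action to $\mathcal{A}\times U(1)$ by $U(1)$-bundle automorphisms for which $\Theta_{c}=\vartheta-2\pi i\lambda_{c}=\vartheta-2\pi i(\rho_{c}+\mu_{c}(\mathfrak{A}))$ descends to a connection $\underline{\Theta}_{c}$ on $\mathcal{U}_{c}=(\mathcal{A}\times U(1))/\mathcal{G}\to\mathcal{A}/\mathcal{G}$, and it yields the holonomy identity $\xi_{c,\underline{\mathfrak{A}}}=\log\mathrm{hol}_{\underline{\Theta}_{c}}$. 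The explicit cocycle $\Phi_{\phi}(A,u)=(\phi A,\exp(2\pi i\alpha_{\phi}(A))\cdot u)$ with $\alpha_{\phi}(A)=\int_{\gamma}\lambda_{c}-\xi_{c,\underline{\mathfrak{A}}}(\pi_{\mathcal{A}}\circ\gamma)$ is read off directly from the corresponding formula in Theorem \ref{Prop} upon substituting $\lambda=\lambda_{c}$ and $\chi=\xi_{c,\underline{\mathfrak{A}}}$.

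Because the statement is a specialization of Theorem \ref{Prop}, there is no genuine obstacle left in the proof itself; the real work has already been carried out in constructing $\lambda_{c}$ and in establishing the curvature relations of the preceding Proposition. The one point that warrants a word of caution is that Theorem \ref{Prop} and the underlying differential-character machinery are stated for ordinary manifolds, whereas here $N=\mathcal{A}$ is an infinite-dimensional Fr\'{e}chet manifold; as noted in the earlier Remark, these constructions extend to the infinite-dimensional setting. It is also worth emphasizing that the simple connectedness of $\mathcal{A}$ is exactly what makes $\alpha_{\phi}$ independent of the chosen path $\gamma$, so that the cocycle $\Phi_{\phi}$ is well defined; this independence is built into Theorem \ref{Prop} and is the feature we are invoking.
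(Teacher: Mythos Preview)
Your proposal is correct and follows exactly the approach of the paper: the paper itself introduces Proposition~\ref{PropIntCS} with the single sentence ``By applying Proposition \ref{Prop} to the bundle $\pi _{\mathcal{A}}\colon \mathcal{A}\rightarrow \mathcal{A}/\mathcal{G}$ and the character $\xi _{c,\underline{\mathfrak{A}}}$ we obtain the following'' and gives no further proof. Your write-up simply spells out this application---verifying that $\mathcal{A}$ is connected and simply connected, and invoking the preceding Proposition for the curvature and the primitive $\lambda_{c}=\rho_{c}+\mu_{c}(\mathfrak{A})$---which is precisely what is intended.
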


\begin{remark}
As the characteristic class of the holonomy of a connection coincides with
the Chern class of the bundle we also have $c_{1}(\mathcal{U}_{c})=\mathrm{%
char}(\xi _{c,\underline{\mathfrak{A}}})=\Upsilon _{(P\times \mathcal{A})/%
\mathcal{G}}/c$ where $\Upsilon _{(P\times \mathcal{A})/\mathcal{G}}\in
H^{2r}(M\times \mathcal{A}/\mathcal{G},\mathbb{Z})$ is the characteristic
class of $(P\times \mathcal{A})/\mathcal{G}\rightarrow M\times \mathcal{A}/%
\mathcal{G}$ associated to $\Upsilon $, and $/$ denotes cap product.
\end{remark}

In place of the principal $U(1)$-bundle $\mathcal{U}_{c}$, we can consider
the associated Hermitian line bundle $\mathcal{L}_{c}$. The connection $%
\underline{\Theta }_{c}$ determines a hermitian connection $\nabla ^{%
\underline{\Theta }_{c}}$ on this bundle.

The bundle $\mathcal{U}_{c}$ is defined using a connection $\mathfrak{A}$ in
$\mathcal{A}\rightarrow \mathcal{A}/\mathcal{G}$, but we have the following

\begin{proposition}
\label{changeChernSimons}The map $\alpha $ is independent of the connection $%
\mathfrak{A}$ chosen in $\mathcal{A}\rightarrow \mathcal{A}/\mathcal{G}$.
\end{proposition}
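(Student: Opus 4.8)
The plan is to fix two connections $\mathfrak{A},\mathfrak{A}'$ on $\mathcal{A}\to\mathcal{A}/\mathcal{G}$, write down the two functions $\alpha_{\phi},\alpha'_{\phi}$ produced by Proposition \ref{PropIntCS}, and show their difference vanishes. First I would observe that for $\phi\in\mathcal{G}$ and $A\in\mathcal{A}$ the curve $\gamma$ joining $A$ and $\phi A$ projects to a $1$-cycle $\bar{\gamma}=\pi_{\mathcal{A}}\circ\gamma\in Z_{1}(\mathcal{A}/\mathcal{G})$, since $A$ and $\phi A$ lie in the same $\mathcal{G}$-orbit. By Proposition \ref{PropIntCS}, $\alpha_{\phi}(A)=\int_{\gamma}\lambda_{c}-\xi_{c,\underline{\mathfrak{A}}}(\bar{\gamma})$ with $\lambda_{c}=\rho_{c}+\mu_{c}(\mathfrak{A})$, and similarly for $\mathfrak{A}'$. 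Because $\rho_{c}=\int_{c}Tp(\mathbb{A},\overline{\mathrm{pr}}_{1}^{\ast}A_{0})$ involves only $\mathbb{A}$ and the background connection $A_{0}$, it is independent of $\mathfrak{A}$; hence $\lambda_{c}-\lambda'_{c}=\mu_{c}(\mathfrak{A})-\mu_{c}(\mathfrak{A}')=\mu_{c}(\mathfrak{A}-\mathfrak{A}')$, and
\begin{equation*}
\alpha_{\phi}(A)-\alpha'_{\phi}(A)=\int_{\gamma}\mu_{c}(\mathfrak{A}-\mathfrak{A}')-\bigl(\xi_{c,\underline{\mathfrak{A}}}-\xi_{c,\underline{\mathfrak{A}}'}\bigr)(\bar{\gamma}).
\end{equation*}

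Next I would evaluate the second term with the transgression identity (\ref{CambioC}). Taking $k=1$ there gives $\bigl(\xi_{c,\underline{\mathfrak{A}}'}-\xi_{c,\underline{\mathfrak{A}}}\bigr)(\bar{\gamma})=\int_{c\times\bar{\gamma}}Tp(\underline{\mathfrak{A}}',\underline{\mathfrak{A}})^{2r-2,1}$, so the difference above becomes $\int_{\gamma}\mu_{c}(\mathfrak{A}-\mathfrak{A}')+\int_{c\times\bar{\gamma}}Tp(\underline{\mathfrak{A}}',\underline{\mathfrak{A}})^{2r-2,1}$. To compute the transgression integral I would pull it back to $M\times\mathcal{A}$ along $\pi_{\mathcal{G}}$ (replacing $c\times\bar{\gamma}$ by $c\times\gamma$) and use naturality of transgression together with Proposition \ref{quotcha}: since $\overline{\pi}_{\mathcal{G}}^{\ast}\underline{\mathfrak{A}}=\mathbb{A}-\mathbb{A}(\mathfrak{A})$ and likewise for $\mathfrak{A}'$, one obtains the difference of the pulled-back connections $\overline{\pi}_{\mathcal{G}}^{\ast}\underline{\mathfrak{A}}'-\overline{\pi}_{\mathcal{G}}^{\ast}\underline{\mathfrak{A}}=\mathbb{A}(\mathfrak{A}-\mathfrak{A}')\in\Omega^{0,1}$.

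Then I would extract the $(2r-2,1)$ component. As $\mathbb{A}(\mathfrak{A}-\mathfrak{A}')$ has bidegree $(0,1)$, the remaining $r-1$ curvature slots in the transgression must together supply bidegree $(2r-2,0)$, forcing each to be its $(2,0)$ part; by Lemma \ref{Lema1} that part equals $\mathbb{F}^{2,0}$, and the transgression parameter integral is trivial because the resulting integrand is parameter-independent at this bidegree. Thus $\pi_{\mathcal{G}}^{\ast}Tp(\underline{\mathfrak{A}}',\underline{\mathfrak{A}})^{2r-2,1}=r\,p(\mathbb{A}(\mathfrak{A}-\mathfrak{A}'),\mathbb{F}^{2,0},\ldots,\mathbb{F}^{2,0})$. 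Integrating over $c\times\gamma$, the $\gamma$-leg contracts $\mathbb{A}(\mathfrak{A}-\mathfrak{A}')$ to $v_{\mathbb{A}}((\mathfrak{A}-\mathfrak{A}')(\dot{\gamma}))=\mathrm{pr}_{1}^{\ast}((\mathfrak{A}-\mathfrak{A}')(\dot{\gamma}))$, while the $c$-integral reproduces exactly $-\mu_{c}$ through its defining formula $\mu_{c}(X)=-r\int_{c}p(\mathrm{pr}_{1}^{\ast}X,\mathbb{F},\ldots,\mathbb{F})$. This gives $\int_{c\times\bar{\gamma}}Tp(\underline{\mathfrak{A}}',\underline{\mathfrak{A}})^{2r-2,1}=-\int_{\gamma}\mu_{c}(\mathfrak{A}-\mathfrak{A}')$, so the two terms cancel and $\alpha_{\phi}=\alpha'_{\phi}$.

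I expect the main obstacle to be the bookkeeping in this last step: matching the $(2r-2,1)$ part of the transgression with the comoment $\mu_{c}$ with the correct sign, and justifying rigorously that the curvature factors contribute only through $\mathbb{F}^{2,0}$ — which is precisely where Lemma \ref{Lema1}, Proposition \ref{FComp} and the identity $v_{\mathbb{A}}(X)=\mathrm{pr}_{1}^{\ast}X$ enter. A secondary point to verify carefully is that $\rho_{c}$, although it depends on the background connection $A_{0}$, drops out of the comparison because it does not involve $\mathfrak{A}$.
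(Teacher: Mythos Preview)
Your proposal is correct and follows essentially the same route as the paper: compare $\alpha_{\phi}$ and $\alpha'_{\phi}$ directly, note that $\rho_{c}$ is independent of $\mathfrak{A}$ so $\lambda_{c}-\lambda'_{c}=\mu_{c}(\mathfrak{A}-\mathfrak{A}')$, use the transgression identity (\ref{CambioC}) for the difference of characters, pull back to $M\times\mathcal{A}$ via $\pi_{\mathcal{G}}$, apply Lemma \ref{Lema1} to get $F_{t}^{2,0}=\mathbb{F}^{2,0}$, and then identify the $(2r-2,1)$-component of the transgression with $\mu_{c}(\mathfrak{A}-\mathfrak{A}')$ so that the two contributions cancel. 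The only cosmetic difference is the order in which you and the paper swap $\mathfrak{A}$ and $\mathfrak{A}'$; the bookkeeping you flagged as the main obstacle is exactly what the paper carries out, and your handling of it is right.
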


\begin{proof}
If $\mathfrak{A}^{\prime }$ is another connection on $\mathcal{A}\rightarrow
\mathcal{A}/\mathcal{G}$ we have $\lambda _{c}^{\prime }=\rho _{c}+\mu _{c}(%
\mathfrak{A}^{\prime })=\lambda _{c}+\mu _{c}(\mathfrak{A}^{\prime }-%
\mathfrak{A})$ and by equation (\ref{CambioC}) for any curve $\gamma $ on $N$
such that $\pi _{\mathcal{A}}(\gamma (0))=\pi _{\mathcal{A}}(\gamma (1))$ we
have $\xi _{c,\underline{\mathfrak{A}}}(\pi _{\mathcal{A}}\circ \gamma )=\xi
_{c,\underline{\mathfrak{A}}}(\pi _{\mathcal{A}}\circ \gamma )+\int_{c\times
\pi _{\mathcal{A}}\circ \gamma }Tp(\underline{\mathfrak{A}}^{\prime },%
\underline{\mathfrak{A}})=\xi _{c,\underline{\mathfrak{A}}}(\pi \circ \gamma
)+\int_{c\times \gamma }Tp(\pi _{\mathcal{G}}^{\ast }\underline{\mathfrak{A}}%
^{\prime },\pi _{\mathcal{G}}^{\ast }\underline{\mathfrak{A}})$, where $\pi
_{\mathcal{G}}\colon M\times \mathcal{A}\rightarrow M\times \mathcal{A}/%
\mathcal{G}$ is the projection. But $Tp(\pi ^{\ast }\underline{\mathfrak{A}}%
^{\prime },\pi ^{\ast }\underline{\mathfrak{A}})=r\int_{0}^{1}p(\pi ^{\ast }(%
\underline{\mathfrak{A}}^{\prime }-\mathfrak{\underline{\mathfrak{A}}}%
),F_{t},\ldots ,F_{t})=r\int_{0}^{1}p(\mathbb{A}(\mathfrak{A}^{\prime }-%
\mathfrak{A}),F_{t},\ldots ,F_{t})$, where $F_{t}$ is the curvature of $%
A_{t}=\mathbb{A}-\mathbb{A}((1-t)\mathfrak{A}^{\prime }+t\mathfrak{A})$. By
Lemma\ \ref{Lema1}\ we have $F_{t}^{2,0}=\mathbb{F}^{2,0}$ and hence $%
\int_{c\times \gamma }Tp(\pi ^{\ast }\underline{\mathfrak{A}}^{\prime },\pi
^{\ast }\underline{\mathfrak{A}})=\int_{c\times \gamma }Tp(\pi ^{\ast }%
\underline{\mathfrak{A}}^{\prime },\pi ^{\ast }\underline{\mathfrak{A}}%
)^{1,2r-2}=\int_{c\times \gamma }r\int_{0}^{1}p(\mathbb{A}(\mathfrak{A}%
^{\prime }-\mathfrak{A}),\mathbb{F},\ldots ,\mathbb{F})=\int_{\gamma }\mu
_{c}(\mathfrak{A}^{\prime }-\mathfrak{A})$. The result follows as we have $%
\alpha _{\phi }^{\prime }(A)=\int\nolimits_{\gamma }\lambda ^{\prime }-\xi
_{c,\underline{\mathfrak{A}}}(\pi \circ \gamma )=\int\nolimits_{\gamma
}\lambda +\int_{\gamma }\mu _{c}(\mathfrak{A}^{\prime }-\mathfrak{A})-\xi
_{c,\underline{\mathfrak{A}}}(\pi \circ \gamma )-\int_{\gamma }\mu _{c}(%
\mathfrak{A}^{\prime }-\mathfrak{A})=\alpha _{\phi }(A)$.
\end{proof}

\bigskip

These bundles are extended to the case of non-free actions and also to the
action of automorphisms in \cite{CSconnections}. It is proved that bundles
for different $A_{0}$ are canonically isomorphic and that these bundles
generalize the Chern Simons lines as if $c=\partial u$, then the
Chern-Simons action $S_{u}(A)=\exp (-2\mathrm{\pi }i\cdot
\int_{u}Tp(A,A_{0}))$ determines a section of $\mathcal{U}_{c}$.

Our interest is in the restriction of $\mathcal{U}_{c}$ to the moduli space
of flat connections. For $r\geq 2$ we have $\mathcal{F}\subset \mu
_{c}^{-1}(0)$. In particular the restriction $\theta _{c}=\iota _{\mathcal{F}%
}^{\ast }\underline{\Theta }_{c}$ to $\mathcal{F}\times U(1)$ of the form $%
\underline{\Theta }_{c}$ does not depend on $\mathfrak{A}$ as $\mu _{c}(%
\mathfrak{A})=0$ on $\mathcal{F}$. We conclude that the restriction of the
bundle $\mathcal{U}_{c}\rightarrow \mathcal{A}/\mathcal{G}$ and of the
connection $\underline{\Theta }_{c}$ do not depend on $\mathfrak{A}$, in
accordance with Theorem \ref{th}.

The curvature of $\theta _{c}$ is the pre-symplectic form $\underline{\sigma
}_{c}$ on $\mathcal{F}/\mathcal{G}$\ obtained by symplectic reduction to $%
\mathcal{F}$ from $(\mathcal{A},\varpi _{c})$. For example, if $M$\ is a
closed oriented surface we can take $c=M$, $G=SU(2)$, $P$ the trivial $SU(2)$%
-bundle, $p(X)=\frac{1}{8\pi ^{2}}\mathrm{tr}(X^{2})$ and $\Upsilon $ the
second Chern class then $\sigma _{M}$ and $\mu _{M}$ coincide with the
symplectic structure and moment map defined on \cite{AB1} (e.g. see \cite%
{equiconn}). Furthermore, it is shown in \cite{CSconnections} that the
bundle $\mathcal{L}_{M}\rightarrow \mathcal{F}/\mathcal{G}$\ is isomorphic
to Quillen determinant line bundle. Hence, our character $\xi _{M}$ is the
holonomy of a connection $\theta _{M}$\ on the determinant line bundle $\det
\overline{\partial }\rightarrow \mathcal{F}/\mathcal{G}$.

If $\dim M>2$, for each submanifold $c$\ of dimension $2$ we have a line
bundle $\mathcal{L}_{c}\rightarrow \mathcal{F}/\mathcal{G}$ with connection $%
\theta _{c}$. A similar family of bundles $\ell _{c}\rightarrow \mathcal{A}/%
\mathcal{G}$ is constructed for example in \cite[Section 5.2.1]{Donaldson}
associated to surfaces $c$ immersed on a $4$-manifold $M$. These bundles are
defined as determinant line bundles of Dirac operators, and are isomorphic
to our bundles\ as they have the same integer Chern class $c_{1}(\ell
_{c})=\Upsilon _{(P\times \mathcal{A})/\mathcal{G}}/c=c_{1}(\mathcal{L}%
_{c})\in H^{2}(\mathcal{A}/\mathcal{G},\mathbb{Z})$.

From equation (\ref{mainobject2}) we conclude that the holonomy of the
connection $\theta _{c}$ only depends on the homology class of $c$.

If $r\geq 3$ we have $\sigma _{c}|_{\mathcal{F}}=0$, and in this case $%
\theta _{c}$ is a flat connection on $\mathcal{L}_{c}\rightarrow \mathcal{F}/%
\mathcal{G}$, and hence its holonomy defines a cohomology class in $H^{1}(%
\mathcal{F}/\mathcal{G},\mathbb{R}/\mathbb{Z})$, in accordance with our
result of equation (\ref{mainobject}). Furthermore this equation implies
that the cohomology class only depends on the homology class of $c$ on $M$.

\section{Particular cases\label{Sectparticular}}

The main objects $\chi ^k$ of this work are close to some constructions
found in the literature:

\paragraph{The case $k=\mathrm{dim}M-2r+1$.}

If we choose a cohomology class $c\in H_{2r-k-1}(M)$, for $k\leq r-1$ we
obtain cohomology classes $\xi _{c}\in H^{k}(\mathcal{F}/\mathcal{G},\mathbb{%
R}/\mathbb{Z})$, $\xi _{c}(s)=\chi ^{k}(c,s)$. When $M$ is a compact
manifold without boundary of dimension $\dim M=2r-k-1$, for $k\leq r-1$, the
cohomology classes $\xi _{M}$ coincide with some classes defined on \cite%
{DupontKamber}. In that article, the authors work with families of
connections instead of $\mathcal{A}/\mathcal{G}$, and Deligne cohomology
instead of differential characters. Note that our result is more general in
the sense that we obtain cohomology classes also for submanifolds $c\subset M
$.

\paragraph{The unitary group.}

For $G=U(n)$, we consider the map (\ref{mainobject}) for the choice of $p_{r}
$ and $\mu _{r}$ as $r$-th Chern polynomial and class, $k\leq r-2$,
respectively. Furthermore, we regard that map in the cohomology level as
\begin{equation*}
\left. \chi _{p_{r}}^{k}\right\vert _{\mathcal{F}/\mathcal{G}}\colon H_{k}(%
\mathcal{F}/\mathcal{G})\rightarrow H^{2r-k-1}(M,\mathbb{R}/\mathbb{Z)}.
\end{equation*}%
By adding the maps for different $r$, we obtain the map $\tilde{\chi}^{k}={%
\textstyle\bigoplus\limits_{r=k+2}^{n}}\left. \chi _{p_{r}}^{k}\right\vert _{%
\mathcal{F}/\mathcal{G}}$,
\begin{equation*}
\tilde{\chi}^{k}\colon H_{k}(\mathcal{F}/\mathcal{G})\rightarrow {\textstyle%
\bigoplus\limits_{r=k+2}^{n}}H^{2r-k-1}(M,\mathbb{R}/\mathbb{Z)},
\end{equation*}%
which is a similar to the construction in \cite{iyer}, by a completely
different method, for the study of cohomological invariants of variations of
flat connections.

\noindent (M.C.L) \textsc{ICMAT (CSIC-UAM-UC3M-UCM)\newline
Departamento de \'Algebra, Geometr\'{\i}a y Topolog\'{\i}a, Facultad de Matem%
\'{a}ticas, Universidad Complutense de Madrid, 28040-Madrid, Spain}

\noindent \emph{E-mail:\/} \texttt{mcastri@mat.ucm.es}

\medskip

\noindent (R.F.P) \textsc{Departamento de Econom\'\i a Financiera y
Contabilidad I. Facultad de Ciencias Econ\'omicas y Empresariales,
Universidad Complutense de Madrid, 28223-Madrid, SPAIN}

\noindent \textit{E-mail:\/} \texttt{roferreiro@ccee.ucm.es}

\end{document}